\documentclass[11pt]{amsart} 
\usepackage{amsthm,amsbsy,amsmath,amssymb,amscd,amsfonts,array,mathrsfs,verbatim,enumerate,xypic,enumitem}
\usepackage[all]{xy}
\xyoption{arc} 

\setlength{\oddsidemargin}{0.5in}        \setlength{\evensidemargin}{0.0cm}
\setlength{\marginparsep}{2mm}          \setlength{\marginparwidth}{.2cm}
\setlength{\textwidth}{15cm}            \setlength{\topmargin}{0.0cm}
\setlength{\textheight}{22cm}           \setlength{\headheight}{.1in}
\setlength{\headsep}{.3in}              
\setlength{\parskip}{2.mm}              

\newtheorem{thm}{Theorem}
\newtheorem{prop}[thm]{Proposition}     
\newtheorem{lem}[thm]{Lemma}
\newtheorem*{descentlemma}{Descent Lemma}

\theoremstyle{definition}

\newtheorem{rem}{Remark}[thm]

\DeclareFontFamily{OT1}{rsfs}{}
\DeclareFontShape{OT1}{rsfs}{n}{it}{<-> rsfs10}{}
\DeclareMathAlphabet{\curly}{OT1}{rsfs}{n}{it}

\newcommand{\C}{{\bf C}} 
\newcommand{\Sets}{{\bf Sets}} 
\newcommand{\Ab}{{\bf Ab}}   
\newcommand{\Sch}{{\bf Sch}} 
\newcommand{\Mon}{{\bf Mon}} 

\newcommand{\D}{{\bf D}} 
\newcommand{\Cat}{{\bf Cat}}
\newcommand{\CFG}{{\bf CFG}}
\newcommand{\LogCat}{{\bf LogCat}}
\newcommand{\LogCFG}{{\bf LogCFG}}
\newcommand{\Log}{{\bf Log}}   
\newcommand{\LogSch}{{\bf LogSch}} 

\renewcommand{\a}{\mathfrak{a}}
\renewcommand{\AA}{\mathbb{A}}  
\newcommand{\ZZ}{\mathbb{Z}} 
\newcommand{\FF}{\mathbb{F}} 
\newcommand{\NN}{\mathbb{N}} 

\newcommand{\m}{\mathfrak{m}}         


\newcommand{\M}{\mathcal{M}} 
\newcommand{\N}{\mathcal{N}} 
\newcommand{\X}{\curly{X}} 
\newcommand{\Y}{\curly{Y}} 
\newcommand{\Z}{\curly{Z}} 
\newcommand{\W}{\curly{W}} 
\renewcommand{\O}{\mathcal O} 

\renewcommand{\u}{\underline}

\newcommand{\ov}{\overline}
\newcommand{\into}{\hookrightarrow}
\newcommand{\be}{\begin{eqnarray*}}
\newcommand{\ee}{\end{eqnarray*}}
\newcommand{\bne}[1]{\begin{eqnarray} \label{#1} }
\newcommand{\ene}{\end{eqnarray}}
\newcommand{\xym}{\xymatrix}
\newcommand{\bp}{\begin{pmatrix}}
\newcommand{\ep}{\end{pmatrix}}


\newcommand{\dlog}{\operatorname{dlog}}
\newcommand{\Dom}{\operatorname{Dom}}  

\newcommand{\Hom}{\operatorname{Hom}}

\newcommand{\Aut}{\operatorname{Aut}}


\newcommand{\Cok}{\operatorname{Cok}}


\newcommand{\Id}{\operatorname{Id}}
\newcommand{\Spec}{\operatorname{Spec}}



\begin{document}

\author{W.~D.~Gillam}
\address{Department of Mathematics, Brown University, Providence, RI 02912}
\email{wgillam@math.brown.edu}
\date{\today}
\title{Logarithmic stacks and minimality}

\begin{abstract} Given a category fibered in groupoids over schemes with a log structure, one produces a category fibered in groupoids over log schemes.  We classify the groupoid fibrations over log schemes that arise in this manner in terms of a categorical notion of ``minimal" objects.  The classification is actually a purely category-theoretic result about groupoid fibrations over fibered categories, though most of the known applications occur in the setting of log geometry, where our categorical framework encompasses many notions of ``minimality" previously extant in the literature.   \end{abstract}

\maketitle

\section{Introduction} \label{section:intro} We assume for the sake of brevity that the reader has some familiarity with fibered categories, though all necessary definitions can be found in \S\ref{section:fiberedcategory}, which the reader may refer to as necessary.  The results of this paper are purely category-theoretic.  The proofs are not particularly difficult or enlightening, and will be relegated to \S\ref{section:proofs}.

Throughout, we consider a fibered category \be \LogSch & \to & \Sch \\ X & \mapsto & \u{X} \\ (f : X \to Y) & \mapsto & (\u{f} : \u{X} \to \u{Y}) \ee with associated groupoid fibration $\Log \to \Sch$ (the restriction of $X \mapsto \u{X}$ to the subcategory of cartesian morphisms).  The underline notation will be used liberally: an underlined object is always one of $\Sch$ and for a functor $F$ to $\Log$ or $\LogSch$, $\u{F}$ is the functor to $\Sch$ defined by composing $F$ with $X \mapsto \u{X}$.

The main example we have in mind is where $\LogSch$ is the category of (integral, or fine, or \dots) log schemes (\S\ref{section:examples}), and $X \to \u{X}$ is the forgetful functor to schemes taking a log scheme to the underlying scheme.  In this situation, a cartesian morphism of log schemes $f : X \to Y$ is the same thing as a \emph{strict} morphism, meaning $f^{\dagger} : f^*\M_Y \to \M_X$ is an isomorphism. 

A \emph{\Log~structure} on a functor $F : \X \to \Sch$ is a functor $\M : \X \to \Log$ with $F = \u{\M}$.\footnote{This agrees with F.~Kato's definition [FK, 3.1] in the situation of the ``main example" and it agrees with the usual notion of a log structure on a scheme $X$ in the case $\X = \Sch / X$.}  Let $\LogCat / \Sch$ denote the $2$-category of categories over $\Sch$ with $\Log$ structure.  Precise definitions of $1$- and $2$-morphisms in $\LogCat / \Sch$ are given in \S\ref{section:logstructures}.

We define a morphism of $2$-categories \be \Phi : \LogCat / \Sch & \to & \Cat / \LogSch \\ (\M : \X \to \Log) & \mapsto & (\X,\M) \ee as follows (c.f.\ [FK, 3.4]).  The objects of $(\X,\M)$ are pairs $(x,f : X \to \M x)$ where $x$ is an object of $\X$ and $f$ is a morphism in the fiber category $\LogSch_{\u{X}}$ (that is, we require $\u{X} = \u{\M}x$ and $\u{f} = \Id_{\u{X}}$).  A morphism $$ h = (a,b) : (x,f:X \to \M x) \to (y, g : Y \to \M y) $$ in $(\X,\M)$ is a pair consisting of a morphism $a : x \to y$ in $\X$ and a morphism $b : X \to Y$ in $\LogSch$ making the diagram in $\LogSch$ $$ \xym{ X \ar[r]^f \ar[d]_b & \M x \ar[d]^{\M a} \\ Y \ar[r]^g & \M y } $$ commute.  The behavior of $\Phi$ on $1$- and $2$-morphisms is explained carefully in \S\ref{section:phi}.

Let $ \LogCFG / \Sch$ denote the full subcategory of $\LogCat / \Sch$ whose objects are those $\M : \X \to \Log$ where $\u{\M} : \X \to \Sch$ is a groupoid fibration.  One easily proves (c.f.\ \S\ref{section:philemma}):

\begin{prop} \label{prop:phi} The functor $\Phi$ takes $\LogCFG / \Sch$ into $\CFG / \LogSch$, hence defines a functor \be \Phi_{\CFG} : \LogCFG / \Sch & \to & \CFG / \LogSch \\ (\M : \X \to \Log) & \mapsto & (\X,\M). \ee \end{prop}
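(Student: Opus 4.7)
The plan is to verify directly that $(\X,\M) \to \LogSch$ is a groupoid fibration whenever $\u{\M} : \X \to \Sch$ is. Concretely, I will show that (i) every morphism of $\LogSch$ admits a lift to $(\X,\M)$ through any object over its target, and (ii) every such lift is cartesian; equivalently, one could check (i) together with the statement that fibers over $\LogSch$ are groupoids, but doing (ii) gives both at once.

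For (i), fix $b : X \to Y$ in $\LogSch$ and $(y, g : Y \to \M y)$ over $Y$. Because $\u{\M} : \X \to \Sch$ is a groupoid fibration, the morphism $\u{b} : \u{X} \to \u{Y}$ admits a cartesian lift $a : x \to y$ in $\X$. Applying $\M$ yields a morphism $\M a : \M x \to \M y$ which, by the very definition of $\Log$ as the subcategory of cartesian morphisms of $\LogSch \to \Sch$, is cartesian over $\u{b}$. The composite $g \circ b : X \to \M y$ also lies over $\u{b}$, so by the universal property of $\M a$ there is a unique morphism $f : X \to \M x$ in $\LogSch$ with $\u{f} = \Id_{\u{X}}$ and $\M a \circ f = g \circ b$. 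This produces an object $(x, f)$ of $(\X, \M)$ over $X$ together with a morphism $(a, b) : (x, f) \to (y, g)$.

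For (ii), suppose $(a'', b'') : (x'', f'') \to (y, g)$ is any morphism in $(\X, \M)$ and $h : X'' \to X$ is a morphism in $\LogSch$ with $b \circ h = b''$. Cartesianness of $a$ for $\u{\M}$ produces a unique $k : x'' \to x$ over $\u{h}$ with $a \circ k = a''$. To see $(k, h)$ is a morphism in $(\X, \M)$, I need $\M k \circ f'' = f \circ h$. Both sides lie over $\u{h}$ in $\Sch$, and postcomposition with $\M a$ gives $\M a'' \circ f'' = g \circ b''$ and $g \circ b \circ h = g \circ b''$ respectively; the cartesian universal property of $\M a$ over $\u{b}$ then forces the two lifts to coincide. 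This $(k, h)$ is the unique filler because its image $k$ in $\X$ is forced by $\u{\M}$ being a CFG.

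The only real obstacle is keeping straight which morphism is cartesian in which fibration: the lift $a$ comes from $\u{\M}$ being a groupoid fibration, while $\M a$ is automatically cartesian in $\LogSch \to \Sch$ as it lies in $\Log$, and the latter is what delivers $f$ and the compatibility above. Once this is untangled the argument is formal. No additional work is needed on $1$- and $2$-morphisms, since $\Phi_{\CFG}$ is obtained merely by restricting $\Phi$ to a full sub-$2$-category.
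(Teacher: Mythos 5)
Your proposal is correct and takes essentially the same route as the paper: the existence step is identical (lift $\u{b}$ to $a:x\to y$ using that $\u{\M}$ is a groupoid fibration, then produce the unique $f$ with $\u{f}=\Id$ from the cartesianness of $\M a$ in $\LogSch\to\Sch$), and your uniqueness/compatibility check rests on the same two universal properties the paper invokes. The only difference is cosmetic: the paper phrases the second step as the unique comparison over $\Id_X$ between two lifts of the same $b$, while you verify cartesianness of the lifts against an arbitrary $h$ in $\LogSch$; the underlying diagram chase is the same.
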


Our main result is a description of the essential image of $\Phi_{\CFG}$ (c.f.\ [FK, 3.5]) in terms of minimal objects, which we now define.

\noindent {\bf Definition.}  For a functor $F : \Z \to \LogSch$, we say that an object $z \in \Z$ is \emph{minimal} iff any solid $\Z$ diagram $$ \xym{ w \ar@{.>}[rr]^k & & z \\ & \ar[lu]^i \ar[ru]_j w' } $$ with $\u{F}i = \u{F}j = \Id$ has a unique completion $k$.

For $(\M : \X \to \Log) \in \LogCFG / \Sch$, an object $z=(x, f : X \to \M x)$ of $(\X , \M)$ is minimal iff $f$ is an isomorphism (\S\ref{section:minimalobjects}).  Indeed, we arrived at the definition of minimal objects by abstracting the important category theoretic properties of such objects $z$.  The main result, proved in \S\ref{section:criterion}, is the

\begin{descentlemma}  A groupoid fibration $F : \Z \to \LogSch$ is in the essential image of $\Phi_{\CFG}$ iff it satisfies the following conditions: \begin{enumerate}[label=B\theenumi., ref=B\theenumi]   \item \label{enoughminimals} For every $w \in \Z$, there is a minimal object $z \in \Z$ and a $\Z$ morphism $i : w \to z$ with $\u{F}i = \Id$.  \item \label{strict} For any $\Z$ morphism $i : w \to z$ with $z$ minimal, $Fi$ is cartesian iff $w$ is minimal. \end{enumerate} In fact, assuming the conditions are satisifed, the restriction $\M$ of $F$ to the full subcategory $\Z^m$ of minimal objects defines an object $(\M : \Z^m \to \Log)$ of $\LogCFG / \Sch$ and there is an equivalence $\Z \simeq (\Z^m , \M)$ in $\CFG / \LogSch$. \end{descentlemma}

One might say that $F$ has \emph{enough minimals} when $F$ satisfies \eqref{enoughminimals} and $F$ has \emph{strict/minimal compatibility} when $F$ satisfies \eqref{strict}, though we do not use this terminology here.

Several instances of this result can be found in the literature on log algebraic stacks, typically in the following form: One defines a groupoid fibration $\Z$ over the category of (fine, or fs) log schemes, then one manages to pick some class of ``minimal objects"  (often called ``basic objects") in $\Z$ by ``pure thought."  Then one verifies the necessary hypotheses and essentially reproves the above theorem for the particular $\Z$ in question.  The ``pure thought" step can now be subsumed by our general notion of minimality, though it is still often necessary to translate our categorical definition into something concrete for a particular $\Z$ in order to check \eqref{enoughminimals},\eqref{strict}.  In log geometry, the concrete notion of minimality is typically a condition on characteristic monoids of various log structures.  In \S\ref{section:examples}, we discuss some specific examples, including: the \emph{log curves} of \cite{FK} (\S\ref{section:logcurves}), and the \emph{log points} of \cite{ACGM} (\S\ref{section:logpoints}).  It can also be shown (directly, without appeal to representability results and the Descent Lemma) that the ``basic/minimal" notion for the \emph{log stable maps} of \cite{C}, \cite{AC}, \cite{GS} is equivalent to our notion.

Our result may be viewed as the first step toward a Logarithmic Artin Criterion for representability of a groupoid fibration over log schemes by an Artin stack with fine log structure.  The problem solved in this paper is posed explicitly by Olsson in [O, 3.5.3].

\noindent {\bf Acknowledgements.}  I thank Dan Abramovich for suggesting the problem of finding a categorical notion of minimality and for suggesting the fibered categories setup.  Qile Chen and Steffen Marcus also provided useful comments on a preliminary version of this work.  This research was partially funded by an NSF Postdoctoral Fellowship.

\section{Proofs} \label{section:proofs}  This section contains the definitions, technical details, and proofs not fully presented in the Introduction.

\subsection{Definitions} \label{section:fiberedcategory} ([Vis 3.1]\footnote{We use Vistoli's definition throughout, but keep in mind that it differs from Definition 5.1 in [SGA1.VI]}) Let $F : \C \to \D$ be a functor.  A $\C$ morphism $f : c' \to c$ is called \emph{cartesian} (relative to $F$) iff, for any $\C$ morphism $c'' \to c'$ the map $h \mapsto Fh$ yields a bijection from completions of the left diagram below to completions of the right diagram below.  $$\xym{ c'' \ar@{.>}[rr]^h \ar[rd] & & c' \ar[ld]^f \\ & c} \quad \quad \xym{ Fc'' \ar@{.>}[rr] \ar[rd] & & Fc' \ar[ld]^{Ff} \\ & Fc } $$  The functor $F$ is called a \emph{fibered category} iff, for any $\D$ morphism $f : d \to d'$ and any object $c'$ of $\C$ with $Fc'=d'$, there is a cartesian morphism $\ov{f} : c \to c'$ with $F \ov{f}=f$.  A fibered category $F : \C \to \D$ is called a \emph{groupoid fibration} iff every $\C$ morphism is cartesian.  A \emph{morphism} of fibered categories $$G : (F : \C \to \D) \to (F' : \C' \to \D)$$ over $\D$ is a functor $G : \C \to \C'$ satisfying $F'G=F$ (actual equality, not merely equivalence) and taking cartesian arrows to cartesian arrows.  

Every $\C$ isomorphism is cartesian, and cartesian morphisms enjoy the following \emph{2-out-of-3 property}: given $\C$ morphisms $f : c \to c'$, $g : c' \to c''$ with $g$ cartesian, one sees easily that $f$ is cartesian iff $gf$ is cartesian.  In particular, a composition of cartesian morphisms is cartesian.  If $F$ is a fibered category, then the restriction of $F$ to the subcategory of $\C$ with the same objects but with only cartesian morphisms as morphisms is a groupoid fibration, called the \emph{groupoid fibration associated to} $F$.

\subsection{More on \Log~structures} \label{section:logstructures} Recall from \S\ref{section:intro} that a \emph{\Log ~structure} on a category $F : \X \to \Sch$ over $\Sch$ is a functor $\M : \X \to \Log$ with $F = \u{\M}$.  Categories over $\Sch$ with \Log~structure form a $2$-category, which has the same objects as $\Cat / \Log$, though there is some ambiguity about what the $1$- and $2$-morphisms are in $\Cat / \Log$, so to be absolutely clear, a $1$-morphism $$(F, F^\dagger) : (\M : \X \to \Log) \to (\N : \Y \to \Log) $$ in $\LogCat / \Sch$ is a functor $F : \X \to \Y$ with $\u{\N F} = \u{\M}$ (i.e.\ a $1$-morphism in $\Cat / \Sch$ between the underlying categories over $\Sch$) together with a natural transformation $$ F^\dagger : \M \to \N F$$ of functors $\X \to \LogSch$ with $\underline{F^\dagger} = \Id$.  The functors in question are naturally viewed as functors to $\Log$, but we do not require the $\LogSch$ morphisms $$F^\dagger(x) : \M x \to \N F x$$ to be cartesian (though we do require $\u{F^\dagger(x)} = \Id$).  If, however, $F^\dagger(x)$ is cartesian for every $x \in \X$, then $(F,F^\dagger)$ is called \emph{cartesian} (or perhaps \emph{strict}).  A $2$-morphism $\eta : (F,F^\dagger) \to (G,G^\dagger)$ is a natural transformation $\eta : F \to G$ with $\u{\eta} = \Id$ making the diagram of natural transformations $$ \xym{ & \M \ar[ld]_{F^\dagger} \ar[rd]^{G^\dagger} \\ \N F  \ar[rr]^{\N \eta} & & \N G   } $$ commute. 

\subsection{More about $\Phi$} \label{section:phi}  For a category $\M : \X \to \Log$ over $\Sch$ with $\Log$ structure, recall that we defined a category $(\X,\M)$ whose objects are pairs $(x,f : X \to \M x)$ where $x$ is an object of $\X$ and $f$ is a morphism in $\LogSch_{\u{X}}$.  The category $(\X,\M)$ is viewed as a category over $\LogSch$ via the forgetful functor \be F : (\X, \M) & \to & \LogSch \\ (x, f : X \to \M x) & \mapsto & X. \ee  This construction defines a morphism \be \Phi : \LogCat / \Sch & \to & \Cat / \LogSch \\ (\M : \X \to \Log) & \mapsto & (\X,\M) \ee of $2$-categories defined on $1$- and $2$-morphisms as follows.

For a $1$-morphism $$(F, F^\dagger) : (\M : \X \to \Log) \to (\N : \Y \to \Log) ,$$ we define \be \Phi(F,F^\dagger) : (\X , \M) & \to & (\Y, \N) \\ (x, f: X \to \M x) & \mapsto & (Fx, F^\dagger(x) \circ f : X \to \N Fx ) . \ee  For a $2$-morphism $\eta : (F,F^\dagger) \to (G,G^\dagger) $ we define the $2$-morphism $$\Phi(\eta) : \Phi(F,F^\dagger) \to \Phi(G,G^\dagger) $$ by declaring $$\Phi(\eta)(x,f:X \to \M x) : \Phi(F,F^\dagger)(x,f:X \to \M x) \to \Phi(G,G^\dagger)(x, f: X \to \M x) $$ to be the $(\Y, \N)$ morphism $$ (\Id_X, \eta(x)) : (Fx, F^\dagger(x) \circ f : X \to \N F x) \to (Gx, G^\dagger x \circ f : X \to \N G x). $$  Note that the diagram $$ \xym@C+10pt{ X \ar@{=}[d] \ar[r]^f & \M x \ar@{=}[d] \ar[r]^{F^\dagger x } & \N Fx \ar[d]^{\N \eta(x)} \\ X \ar[r]^f & \M x \ar[r]^{G^\dagger x} & \N G x } $$ commutes by the commutativity condition on the $\LogCat / \Sch$ $2$-morphism $\eta$.

By definition, a $2$-morphism in $\Cat / \LogSch$ between $1$-morphisms $$\phi, \psi : (F : \Z \to \LogSch) \to (G : \W \to \LogSch)$$ is a natural transformation $\eta : \phi \to \psi$ such that the $\W$ morphism $\eta(z) : \phi(z) \to \psi(z)$ satisfies $G \eta(z) = \Id$ for every $z \in \Z$.  (Note that our functor $\Phi$ takes $2$-morphisms in $\LogCat / \Sch$ to such $2$-morphisms!)  This is stronger than only requiring $\u{G} \eta(z) = \Id$.  This is worth emphasizing because we will be interested in the essential image of morphisms to $\Cat / \LogSch$ and the notion of essential image depends on the notion of $2$-morphisms.

\subsection{Proof of Proposition~\ref{prop:phi}} \label{section:philemma} Given a morphism $b : X \to Y$ in $\LogSch$ and an object $(y, g : Y \to \M y)$ of $(\X, \M)$ lying over $Y$, we use the fact that $\u{\M} : \X \to \Sch$ is a groupoid fibration (and the fact that $y \in \X_{\u{Y}}$ by definition of the objects of $(\X,\M)$) to find an $\X$ morphism $a : x \to y$ such that $\u{\M} a = \u{b}$, then we use the fact that the $\Log$ morphism $\M a$ is a cartesian morphism in $\LogSch$ to find a $\LogSch$ morphism $f : X \to \M x$ with $\u{f}= \Id_{\u{X}}$ making the diagram $$ \xym{ X \ar[r]^f \ar[d]_b & \M x \ar[d]^{\M a} \\ Y \ar[r]^g & \M y } $$ commute.  We have constructed an $(\X, \M)$ morphism \be (a,b) : (x,f : X \to \M x) & \to & (y,g : Y \to \M y) \ee lying over $b$ with the desired codomain. 

Given two lifts \be (a,b) : (x,f : X \to \M x) & \to & (y,g : Y \to \M y) \\ (a',b) : (x',f' : X \to \M x') & \to & (y,g : Y \to \M y) \ee of $b$ with codomain $(y, g : Y \to \M y)$, we use the fact that $\X \to \Sch$ is a groupoid fibration to find a unique $h : x \to x'$ such that $\u{h} = \Id$ and $a = a' h$.  The commutativity $f' = (\M h)f$ is automatic by the uniqueness requirement in the definition of a cartesian morphism, so \be (h,\Id_X) : (x, f : X \to \M x) & \to & (x',f' : X \to \M x') \ee is the unique lift of $\Id_X$ to $(\X,\M)$ satisfying $(a,b) = (a', b) (h,\Id_X)$. 

\subsection{Minimal objects} \label{section:minimalobjects} We now collect together some basic properties of minimal objects for later use.

\noindent {\bf Definition.} An object $D$ of a category $\C$ is called \emph{pseudo-terminal} iff $\Hom_{\C}(C,D)$ is a pseudo-torsor (either empty or a torsor) under $\Aut_{\C}(D)$ for every $C \in \C$.  

\begin{prop} \label{prop:minimalityproperties} Let $F : \Z \to \LogSch$ be any functor.  If $z \in \Z$ is minimal, then $z$ is pseudo-terminal in its fiber category $\Z_{\u{F}z}$ over $\Sch$ and any map $z \to w$ in $\Z_{\u{F}z}$ has a retract in $\Z_{\u{F}z}$ (and $z \to w$ is an isomorphism when $w$ is also minimal).\end{prop}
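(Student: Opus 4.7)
The plan is to apply the minimality of $z$ in three carefully chosen diagrammatic configurations, each time verifying that the morphism produced by minimality lies in the relevant fiber category $\Z_{\u{F}z}$ (i.e.\ has image $\Id$ under $\u{F}$). The whole argument is really a sequence of invocations of the \emph{uniqueness} half of minimality, not the existence half.

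For pseudo-terminality of $z$ in $\Z_{\u{F}z}$, assume $\Hom_{\Z_{\u{F}z}}(C,z)$ is nonempty and pick two morphisms $j_1, j_2 : C \to z$ there. Instantiating the minimality diagram for $z$ with $w' = C$, $w = z$, $i = j_1$, and $j = j_2$ produces a unique $k : z \to z$ with $k j_1 = j_2$. Applying $\u{F}$ to this identity and using $\u{F} j_1 = \u{F} j_2 = \Id$ forces $\u{F} k = \Id$, so $k$ lies in $\Z_{\u{F}z}$. Reversing the roles of $j_1$ and $j_2$ produces $k' : z \to z$ with $k' j_2 = j_1$; then $(k' k) j_1 = j_1 = \Id_z \cdot j_1$, and the uniqueness clause of minimality applied with both legs equal to $j_1$ gives $k' k = \Id_z$, and symmetrically $k k' = \Id_z$. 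Hence $k \in \Aut_{\Z_{\u{F}z}}(z)$, and the uniqueness of $k$ says that $\Aut_{\Z_{\u{F}z}}(z)$ acts freely and transitively on $\Hom_{\Z_{\u{F}z}}(C,z)$.

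For the retract assertion, given any $j : z \to w$ in $\Z_{\u{F}z}$, instantiate the minimality diagram for $z$ with $w' = z$, $i = j$, and $j$-leg equal to $\Id_z$. This yields a unique $r : w \to z$ with $r j = \Id_z$; applying $\u{F}$ gives $\u{F} r = \Id$, so $r$ is a retract inside $\Z_{\u{F}z}$. Finally, suppose $w$ is also minimal. It remains to show $j r = \Id_w$. Both $j r$ and $\Id_w$ are endomorphisms of $w$ with $\u{F}(\cdot) = \Id$, and both compose with $j : z \to w$ to give $j$. Instantiating the minimality diagram for $w$ with $w' = z$ and both legs equal to $j$, the uniqueness clause forces $j r = \Id_w$, so $j$ is an isomorphism.

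The calculations are entirely elementary; the only obstacle I anticipate is the bookkeeping required to confirm that each new morphism produced by minimality lands in the correct fiber category, which is handled uniformly by applying $\u{F}$ to the defining equation and using that every other morphism in sight already has $\u{F}(\cdot) = \Id$.
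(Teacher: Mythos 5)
Your proof is correct and follows essentially the same route as the paper: both the pseudo-torsor and retract claims come from the same instantiations of the minimality diagram for $z$, with $\u{F}$ applied to the completion identity to confirm the new morphisms land in the fiber. The only (cosmetic) difference is the final step: the paper deduces $fr=\Id_w$ by noting the retract $r$ is split monic (since $w$ is minimal) and cancelling in $rfr=r$, whereas you invoke the uniqueness clause of $w$'s minimality with both legs equal to $j$ --- the same use of $w$'s minimality, just packaged slightly more directly.
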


\begin{proof} For the first statement: Given $f,g : w \to z$ with $\u{F}f = \u{F} g = \Id$, use minimality of $z$ to uniquely complete $$\xym{  z \ar@{.>}[rr]^k  & & z \\ & w \ar[lu]^f \ar[ru]_g } $$ and note that $\u{F} k =\Id$ by applying $\u{F}$ to the completed diagram.  Note that $k$ is an isomorphism because one can find an inverse by exchanging the roles of $f,g$ above.

For the second statement, if $f : z \to w$ has $\u{F}f = \Id$, then we obtain a retract $r$ of $f$ in $\Z_{\u{F}z}$ by completing the diagram $$\xym{  w \ar@{.>}[rr]^r  & & z \\ & z \ar[lu]^f \ar@{=}[ru] } $$ using minimality of $z$.  To prove that the retract $r : w \to z$ is an inverse for $f$, it suffices to prove that $fr = \Id$.  But when $w$ is minimal, $r$ also has a retract, so in particular it is monic, hence it suffices to prove $rfr = r \Id,$ which is obvious because $r$ retracts $f$. \end{proof}

\noindent {\bf Definition.}  A set $\curly{W}$ of objects of a category $\C$ is called \emph{weakly terminal} iff it satisfies the following two conditions: \begin{enumerate}[label=W\theenumi., ref=W\theenumi] \item \label{W1} For every $C \in \C$ there is a $D \in \curly{W}$ and a $\C$ morphism $C \to D$. \item \label{W2} For any solid $\C$ diagram $$\xym{  D \ar@{.>}[rr]^f  & & D' \\ & C \ar[lu] \ar[ru] } $$ with $D,D' \in \curly{W}$ there is a unique $f$ completing the diagram. \end{enumerate}  An object $D$ of $\C$ is called \emph{weakly terminal} iff $\{ D \}$ is weakly terminal in the above sense.  That is, $D$ is weakly terminal iff $\Hom_{\C}(C,D)$ is a torsor under $\Aut_{\C}(D)$ for every $C \in \C$.

\noindent {\bf Remark.}  The $f$ in \eqref{W2} is always an isomorphism by the usual argument: reverse the roles of $D,D'$ to get an $h : D' \to D$, then show that $fh$ and $hf$ are the identity by using uniqueness of the completion.

\begin{prop} If a functor $F : \Z \to \LogSch$ satisfies \eqref{enoughminimals}, then for any $z \in \Z$ the set $\Z_{\u{F}z} \cap \Z^{\rm min}$ of minimal objects in the fiber category over $\Sch$ is weakly terminal in this fiber category. \end{prop}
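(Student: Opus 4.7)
The plan is to verify the two defining conditions \eqref{W1} and \eqref{W2} of a weakly terminal set for $\Z_{\u{F}z} \cap \Z^{\rm min}$, viewed as a subset of the fiber category $\Z_{\u{F}z}$.

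For \eqref{W1}, given any $w \in \Z_{\u{F}z}$, I would apply hypothesis \eqref{enoughminimals} to $w$ to obtain a minimal $z' \in \Z$ together with a $\Z$-morphism $i : w \to z'$ satisfying $\u{F}i = \Id$. The key observation is that because $\u{F}i$ is the identity on $\u{F}w$, the codomain $z'$ automatically lies in the same fiber as $w$, so $z' \in \Z_{\u{F}z} \cap \Z^{\rm min}$ and $i$ is a morphism in that fiber.

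For \eqref{W2}, given minimal $D, D' \in \Z_{\u{F}z}$ and fiber-morphisms $i : C \to D$, $j : C \to D'$ (necessarily with $\u{F}i = \u{F}j = \Id$ by definition of the fiber), the plan is to apply the definition of minimality to $D'$, taking $(w, z, w') = (D, D', C)$ in the notation of the Definition in \S\ref{section:intro}. This directly produces a unique $\Z$-morphism $f : D \to D'$ with $f \circ i = j$. Applying $\u{F}$ to this relation forces $\u{F}f = \Id$, so $f$ already lives in the fiber category; uniqueness in the fiber is a fortiori implied by uniqueness in $\Z$.

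I do not anticipate any real obstacle: both conditions are essentially one-line extractions from \eqref{enoughminimals} and the definition of minimality, respectively. The one small subtlety to record is that completions produced by minimality automatically stay inside a single fiber, because morphisms with $\u{F}$-image equal to the identity are closed under composition. With that observation in hand the two verifications are immediate.
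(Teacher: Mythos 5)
Your proposal is correct and follows essentially the same route as the paper: (W1) falls out of \eqref{enoughminimals} once you note the minimal object and the morphism stay in the fiber, and (W2) is exactly the paper's application of minimality of the codomain object, with the (correct) observation that the completion automatically lies in the fiber and that uniqueness in $\Z$ gives uniqueness in the fiber a fortiori.
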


\begin{proof} Certainly \eqref{enoughminimals} implies that $\Z_{\u{F}z} \cap \Z^{\rm min}$  has property \eqref{W1}.  To see that $\Z_{\u{F}z} \cap \Z^{\rm min}$  has property \eqref{W2}, consider a solid diagram $$ \xym{ z' \ar@{.>}[rr]^k & & z'' \\ & w \ar[lu]^f \ar[ru]_g  } $$ in $\Z_{\u{F}z}$ with $z',z''$ minimal.  The unique completion $k$ exists by minimality for $z''$.  \end{proof}

\subsection{Proof of the Descent Lemma} \label{section:criterion}  We start by proving a lemma which is basically one implication in the Descent Lemma.

\begin{lem} \label{lem:thecriterion} For $(\M : \X \to \Log) \in \LogCFG / \Sch$, an object $z=(x, f : X \to \M x)$ of $(\X , \M)$ is minimal iff $f$ is an isomorphism.  For every object $w$ of $(\X,\M)$, there is a minimal object $z$ of $(\X,\M)$ and a morphism $i : w \to z$ with $\u{F} i = \Id$.   For every minimal object $z = (x,f : X \to \M x)$ of $(\X,\M)$, every $w = (y,g : Y \to \M y)$ in $(\X,\M)$, and every morphism $i = (a,b) : w \to z$, the map $b : Y \to X$ is cartesian iff $w$ is minimal.  \end{lem}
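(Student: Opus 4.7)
The lemma packages three statements, and the whole proof turns on the first: the characterization that $z = (x, f : X \to \M x)$ is minimal in $(\X,\M)$ iff $f$ is an isomorphism. Once this is established, the other two parts will fall out with essentially no new ideas.

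For the $(\Leftarrow)$ direction of the characterization, my plan is to take a minimality diagram in $(\X,\M)$ with vertices $w, w', z$, unpack $i$ and $j$ into their component pairs $(a_i, b_i)$ and $(a_j, b_j)$, and build $k = (a_k, b_k)$ by hand. Projecting the two commuting squares defining $i$ and $j$ to $\Sch$, together with the hypothesis $\u{b_i} = \u{b_j} = \Id$, forces $\u{\M a_i} = \u{\M a_j} = \Id$, so both $a_i$ and $a_j$ lie in a fiber of $\u{\M}$. Since $\u{\M}$ is a groupoid fibration this fiber is a groupoid, so $a_i$ is invertible; I set $a_k := a_j a_i^{-1}$ and, using that $f$ is an isomorphism, define $b_k := f^{-1} (\M a_k) g$. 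The defining square for $k$ then holds by construction of $b_k$, the equation $a_k a_i = a_j$ is automatic, and $b_k b_i = b_j$ follows by composing the $i$- and $j$-squares and canceling $f$. Uniqueness of $k$ comes from the invertibility of $a_i$, which pins down $a_k$, and of $f$, which pins down $b_k$.

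The main obstacle is the $(\Rightarrow)$ direction. Here my plan is to test $z$ against the auxiliary object $z' := (x, \Id_{\M x})$, which is minimal by the direction just established. There is an evident morphism $(\Id_x, f) : z \to z'$ in $(\X,\M)$ --- its defining square commutes trivially --- and it lies over the identity of $\Sch$. Since both endpoints are minimal, Proposition~\ref{prop:minimalityproperties} promotes this morphism to an isomorphism, and in particular forces its second component $f$ to be an isomorphism.

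The last two statements then reduce to formalities. For the existence of a minimal cover of $w = (y, g : Y \to \M y)$, I take $z := (y, \Id_{\M y})$ and $i := (\Id_y, g) : w \to z$, whose defining square is immediate. For the final statement, once $z$ and $w$ are translated via Part 1, the defining square for $i = (a,b)$ rewrites $b$ as $f^{-1} (\M a) g$; the factors $f^{-1}$ and $\M a$ are cartesian in $\LogSch$ (the former because it is an isomorphism, the latter by the definition of $\Log$), so two applications of the 2-out-of-3 property for cartesian morphisms reduce ``$b$ cartesian'' to ``$g$ cartesian''. I finish by invoking the standard fact that a morphism of a fibered category lying over an identity is cartesian iff it is an isomorphism, which converts ``$g$ cartesian'' into ``$g$ iso'', i.e.\ ``$w$ minimal'' by Part 1.
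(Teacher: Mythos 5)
Your proof is correct and follows essentially the same route as the paper: the ``$f$ isomorphism $\Rightarrow$ minimal'' direction via invertibility of $a_i$ (a fiber morphism of the groupoid fibration $\u{\M}$) and the explicit formula $b_k = f^{-1}(\M a_k)g$, the converse by comparing $z$ with $(x,\Id_{\M x})$ and invoking Proposition~\ref{prop:minimalityproperties}, the same choice $(y,\Id_{\M y})$ for the second statement, and the 2-out-of-3 property together with ``cartesian over an identity iff isomorphism'' for the last. No gaps.
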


\begin{proof} To prove that $z=(x, f : X \to \M x)$ is minimal when $f$ is an isomorphism, first note that the solid part of a diagram $$ \xym{ w \ar@{.>}[rr]^k & & z \\ & w' \ar[lu]^i \ar[ru]_j  } $$ in $(\X, \M)$ with $\u{F}i = \u{F}j = \Id$  corresponds to a solid diagram $$ \xym{ y \ar@{.>}[rr]^c & & x \\ & y' \ar[lu]^a \ar[ru]_b } $$ in $\X$ with $\u{\M} a = \u{\M} b = \Id$ together with a solid diagram $$ \xym@R-15pt{ Y \ar@{.>}[rr]^d \ar[dd] & & X \ar[dd]^f_{\cong} \\ & Y' \ar[lu] \ar[ru] \ar[dd] \\ \M y \ar@{.>}[rr] & & \M x \\ & \M y' \ar[lu]^{\M a} \ar[ru]_{\M b} } $$ in $\LogSch$ where the underlined versions of all vertical arrows are $\Id$.  Since $\u{\M}$ is a groupoid fibration and $\u{\M} a = \Id$, $a$ is an isomorphism, so the first diagram can be uniquely completed with an $\X$ morphism $c : y \to x$ as indicated. Then, after filling in the bottom dotted arrow in the second diagram with $\M c$, there is a unique choice $d$ for the top dotted arrow because $f$ is an isomorphism.  (One also checks easily that this $d$ makes the top triangle commute.)  Evidently $k := (c,d)$ is the unique completion of the original diagram.  

To prove that $f$ is an isomorphism when $z = (x,f : X \to \M x)$ is minimal, consider the map $$(\Id_x, f) : (x, f : X \to \M x) \to (x, \Id : \M x \to \M x). $$ Since $(x, \Id : \M x \to \M x)$ is minimal by what we just proved and $z$ is minimal, the map $(\Id_x,f)$ must be an isomorphism by the last part of Proposition~\ref{prop:minimalityproperties}, hence $f$ must be an isomorphism.

For the next statement, if $w=(x, f: X \to \M_z)$, just take $z := (x, \Id : \M x \to \M x)$ and $i = (\Id_x, f)$.  The object $z$ is minimal by the statement we just proved. 

For the final statement, note that by definition of a morphism in $(\X,\M)$, we have a commutative diagram $$ \xym{ Y \ar[r]^g \ar[d]_b & \M y \ar[d]^{\M a} \\ X \ar[r]^f & \M x } $$ in $\LogSch$ and by what we have already proved, the minimality of $z$ means that $f$ is an isomorphism, so $f$ and its inverse $f^{-1}$ are cartesian (isomorphisms are cartesian, \S\ref{section:fiberedcategory}).  If $b$ is cartesian, then $g$ is also cartesian by the 2-out-of-3 property of cartesian morphisms  (\S\ref{section:fiberedcategory}) because $(\M a)g = fb$ is cartesian, and $\M a$ is cartesian; but then $g$ must be an isomorphism since $\Log \to \Sch$ is a groupoid fibration, so $w$ is minimal.  If $w$ is minimal, then $g$ is an isomorphism, hence cartesian, so $b =   f^{-1} (\M a)g$ is also cartesian since it is displayed as a composition of cartesian morphisms.

\end{proof}

Next we need two preparatory lemmas.  The first says that, under mild hypotheses on $F$, a minimal object actually enjoys a stronger lifting property than the defining one.

\begin{lem} \label{lem:minimality1} Suppose $F : \Z \to \LogSch$ is a groupoid fibration satisfying \eqref{strict}.  Then any solid diagram $$ \xym{ w \ar@{.>}[rr]^k & & z \\ & w' \ar[ul]^i \ar[ur]_j } $$ with $\u{F} i = \Id$ and $z$ minimal has a unique completion $k$. \end{lem}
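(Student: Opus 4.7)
The plan is to reduce the strengthened lifting property back to the original definition of minimality by peeling off a cartesian piece of $j$. The defining property of minimality yields a unique completion only when both $\u{F}i$ and $\u{F}j$ are the identity, whereas here $\u{F}j$ may be arbitrary; the strategy is therefore to factor $j$ as a cartesian morphism into $z$ followed by a morphism that lies in a single fibre of $\u{F}$, and to show that the intermediate object is still minimal.

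To set up this factorization, I will first use that $\LogSch \to \Sch$ is a fibered category to produce a cartesian lift $h : H \to Fz$ of $\u{F}j : \u{F}w' \to \u{F}z$, and then use that $F$ itself is fibered to lift $h$ to a $\Z$ morphism $\tilde{h} : \tilde{z}' \to z$ with $F\tilde{h} = h$. The universal property of $h$ (and then of $\tilde{h}$) produces a unique $\tilde{j} : w' \to \tilde{z}'$ with $\tilde{h}\tilde{j} = j$ and $\u{F}\tilde{j} = \Id$. The key invocation of \eqref{strict} is then that $F\tilde{h} = h$ is cartesian in $\LogSch$ and $z$ is minimal, so $\tilde{z}'$ is minimal as well.

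With this in hand, existence is immediate: both $w$ and $\tilde{z}'$ lie over $\u{F}w = \u{F}w'$, and $i$, $\tilde{j}$ are morphisms in this fibre, so the defining property of minimality applied to $\tilde{z}'$ produces a unique fibre morphism $k' : w \to \tilde{z}'$ with $k'i = \tilde{j}$, and $k := \tilde{h}k'$ is the required completion. For uniqueness, given $k_1, k_2 : w \to z$ with $k_a i = j$, applying $F$ shows $\u{F}k_1 = \u{F}k_2$, and the cartesianness of $\tilde{h}$ factors each uniquely as $k_a = \tilde{h}\tilde{k}_a$ with $\u{F}\tilde{k}_a = \Id$; a short comparison of $F$-images through the uniqueness clause of the cartesian lift $h$ forces $\tilde{k}_a i = \tilde{j}$, after which minimality of $\tilde{z}'$ gives $\tilde{k}_1 = \tilde{k}_2$ and hence $k_1 = k_2$.

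The main obstacle is conceptual rather than computational: seeing that a cartesian factorization of $j$ through a new object $\tilde{z}'$ moves the problem into a fibre where the original minimality definition is directly applicable, and recognizing that \eqref{strict} is precisely the hypothesis which ensures $\tilde{z}'$ remains minimal after the factorization. Everything else is routine use of the universal property of cartesian morphisms and the $2$-out-of-$3$ principle recorded in \S\ref{section:fiberedcategory}.
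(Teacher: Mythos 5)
Your proposal is correct and follows essentially the same route as the paper's proof: factor $j$ through a $\Z$-lift of a cartesian arrow over $\u{F}j$, use \eqref{strict} to see the intermediate object is still minimal, apply the defining property of minimality in the fibre, and handle uniqueness by factoring an arbitrary completion through the cartesian lift via the identity in $\Sch$. The only difference is cosmetic: you spell out the comparison $\tilde{k}_a i = \tilde{j}$ that the paper leaves implicit.
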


\begin{proof} This would be clear from the definitions if we also knew that $\u{F}j = \Id$, so our goal is to reduce to that case.  We first use that $\LogSch \to \Sch$ is a fibered category to find a cartesian arrow $f : X \to F z$ with $\u{f} = \u{F} j$.  Then we use the property of the cartesian arrow $f$ to find a (unique) completion $h$ of $$ \xym{ Fw' \ar@{.>}[rr]^h \ar[rd] & & X \ar[ld] \\ & Fz } $$ in $\LogSch$ with $\u{h}= \Id$.  Next we use that $F$ is a groupoid fibration to find a $\Z$ morphism $g : z' \to z$ with $F g = f$ and a completion $j'$ of $$ \xym{ w' \ar@{.>}[rr]^{j'} \ar[rd]_j & & z' \ar[ld]^g \\ & z } $$ with $F j' = h$.  The object $z'$ is minimal since $z$ is minimal, $Fg = f$ is cartesian, and $F$ satisfies \eqref{strict}.  Since we also have $\u{F} j' = \u{h} = \Id$, the defining property of the minimal object $z'$ ensures that there is a unique completion $l$ of the diagram $$ \xym{ w \ar@{.>}[rr]^l & & z' \ar[rd]^g \\ & \ar[lu]^i \ar[ru]_{j'} w' \ar[rr]_j & & z } $$ so we can certainly complete the original diagram by taking $k = gl$.  To show that this is the \emph{unique} completion of the original diagram, it suffices to show that \emph{any} such completion $k$ can be factored through $g$.  That is, given any $k$ completing the original diagram we want to prove that there is a completion $a$ of the diagram: $$ \xym{ w \ar@{.>}[rr]^a \ar[rd]_k & & z' \ar[ld]^g \\ & z } $$  Certainly we can complete $\u{F}$ of this diagram in $\Sch$ with the identity map since $\u{F} i = \u{F} j' = \Id$.  We can then lift that (identity) completion to a completion of $F$ of this diagram using the fact that $Fg = f$ is cartesian.  We can then lift \emph{that} completion to the desired completion $a$ using that $F$ is a groupoid fibration. \end{proof}

\begin{rem} For a $(\X, \M : \X \to \Log) \in \LogCFG / \Sch$, our proof of Lemma~\ref{lem:thecriterion} actually shows that an object $z = (x,f : X \to \M x)$ of $(\X,\M)$ with $f$ an isomorphism has this stronger lifting property (we never used that $\u{\M} b = \Id$ in the first paragraph of the proof).  \end{rem}

\begin{lem} \label{lem:minimality2} Suppose $F : \Z \to \LogSch$ is a groupoid fibration satisfying \eqref{strict}.  Then any solid $\Z$ diagram $$ \xym{ z' \ar@{.>}[rr]^k \ar[rd]_i & & z \ar[ld]^j \\ & w }$$ with $\u{F}i = \u{F} j$ and $w,z$ minimal has a unique completion $k$ with $\u{k} = \Id$. \end{lem}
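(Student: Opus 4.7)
The plan is to reduce the construction of $k$ in $\Z$ to the construction of an underlying $\LogSch$ morphism $\psi : Fz' \to Fz$ via the cartesian property of $j$ relative to $F$, and then to build $\psi$ by invoking cartesianness of $Fj$ in $\LogSch$ over $\Sch$, which is exactly what condition \eqref{strict} supplies in this setting.

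First I would observe that since $F$ is a groupoid fibration, $j : z \to w$ is cartesian relative to $F$. Applying the defining bijection of cartesianness to the given morphism $i : z' \to w$, completions $k : z' \to z$ satisfying $jk = i$ correspond bijectively, via $k \mapsto Fk$, to $\LogSch$ morphisms $\psi : Fz' \to Fz$ satisfying $Fj \circ \psi = Fi$. In particular, completions $k$ with $\u{F}k = \Id$ correspond precisely to those $\psi$ with $\u{\psi} = \Id$.

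Next I would apply \eqref{strict} to the $\Z$ morphism $j : z \to w$: since $w$ is minimal by hypothesis and $z$ is also minimal, $Fj$ is a cartesian morphism of $\LogSch$ relative to the underlying-scheme fibration $\LogSch \to \Sch$. The hypothesis $\u{F}i = \u{F}j$ makes $\Id : \u{F}z' \to \u{F}z$ a well-defined $\Sch$ morphism satisfying $\u{F}j \circ \Id = \u{F}i$, so the cartesian property of $Fj$ in $\LogSch$ yields a unique $\LogSch$ morphism $\psi : Fz' \to Fz$ with $\u{\psi} = \Id$ and $Fj \circ \psi = Fi$. Pulling this $\psi$ back through the bijection in the first step produces the desired unique $k$ with $\u{F}k = \Id$.

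The main obstacle is recognising that \eqref{strict}, applied to the arrow $j$ with both endpoints already minimal, is precisely what converts cartesianness of $j$ upstairs (relative to $F$) into cartesianness of $Fj$ downstairs (relative to $\LogSch \to \Sch$). Without this translation there would be no handle on producing a $\psi$ lying in the fibre over $\Id$. Once the translation is in place, the proof is a two-stage cartesian chase with no further content, and uniqueness of $k$ follows formally from uniqueness of $\psi$ combined with the bijection in the first step.
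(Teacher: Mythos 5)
Your proof is correct, and it takes a genuinely more direct route than the paper's. The paper mimics the proof of Lemma~\ref{lem:minimality1}: it chooses a cartesian arrow $f : X \to Fw$ in $\LogSch$ over $\u{F}j$, lifts it to a $\Z$ morphism $g : w' \to w$, factors $i$ and $j$ through the auxiliary object $w'$ (which is minimal by \eqref{strict} since $Fg=f$ is cartesian), and then invokes Proposition~\ref{prop:minimalityproperties} to see that the factored arrow $b : z \to w'$ is an isomorphism; only in its uniqueness step does it recover, indirectly, that $Fj$ is cartesian, by writing $Fj = f\circ Fb$ as a composite of cartesian arrows. You bypass the auxiliary object entirely by observing that \eqref{strict}, applied to $j$ itself with codomain $w$ minimal and domain $z$ minimal, already says that $Fj$ is cartesian in $\LogSch \to \Sch$; after that, existence and uniqueness of $k$ follow simultaneously from two applications of the definition of cartesianness: the unique $\psi : Fz' \to Fz$ over $\Id$ with $(Fj)\psi = Fi$ (using $\u{F}i = \u{F}j$, which also guarantees $\u{F}z' = \u{F}z$ so that $\Id$ makes sense), and then the unique $k$ with $Fk = \psi$ and $jk = i$ (using that $j$ is cartesian relative to $F$ because $F$ is a groupoid fibration). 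Your phrasing that \eqref{strict} ``converts cartesianness of $j$ upstairs into cartesianness of $Fj$ downstairs'' is slightly loose --- \eqref{strict} trades minimality of the domain for cartesianness of $Fj$, given a minimal codomain --- but the application is exactly right. What each approach buys: yours is shorter, needs neither $w'$ nor Proposition~\ref{prop:minimalityproperties}, and isolates the real content (cartesianness of $Fj$); the paper's pullback template has the advantage of also covering Lemma~\ref{lem:minimality1}, where only one endpoint is minimal and your direct appeal to \eqref{strict} is unavailable.
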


\begin{proof} If we also knew that $\u{F} j = \Id$, then this would be trivial because then $j$ would be an isomorphism by Proposition~\ref{prop:minimalityproperties}, so our goal is to reduce to that case as we did in the previous proof.  As usual, we use that $\LogSch \to \Sch$ is a fibered category to find a cartesian arrow $f : X \to Fw$ lying over the $\Sch$ morphism $\u{F} j$, then we use the defining property of $f$ to find (unique) $\LogSch$ morphisms $i', j'$ completing the diagram $$ \xym{ Fz' \ar@/_1pc/[rdd]_{Fi} \ar@{.>}[rd]^{i'} \ar@{.>}[rr] & & Fz \ar@{.>}[ld]_{j'} \ar@/^1pc/[ldd]^{Fj} \\ & X \ar[d]^f \\ & Fw }$$ with $\u{i}' = \u{j}' = \Id$.  Then we use that $F$ is a groupoid fibration to find $g : w' \to w$ with $Fg=f$ and $a,b$ completing $$ \xym{ z' \ar@/_1pc/[rdd]_{i} \ar@{.>}[rd]^{a} \ar@{.>}[rr]^k & & z \ar@{.>}[ld]_{b} \ar@/^1pc/[ldd]^{j} \\ & w' \ar[d]^g \\ & w }$$ with $Fa=i'$, $Fb=j'$.  We know $w'$ is minimal by \eqref{strict} since $Fg=f$ is cartesian and $w$ is minimal, and we have $\u{F}a = \u{F}b = \Id$, so $b$ is an isomorphism by Proposition~\ref{prop:minimalityproperties} because $z$ is also minimal, hence we certainly have a unique completion $k$ \emph{making the small triangle commute} (i.e. $a = bk$, which automatically implies $\u{F}k = \Id$), and this same $k$ automatically makes the big triangle commute (i.e. $i = jk$).  To complete the proof (i.e.\ to establish the uniqueness statement), it is enough to show that any $k'$ making the big triangle commute (i.e. $i = jk'$) is equal to $k$ \emph{under the additional hypothesis} $\u{F}k' = \Id$  (the statement is not generally true without this additional hypothesis).  Since $b$ is an isomorphism, so is $j'=Fb$, so in particular it is cartesian, hence $Fj = f j'$ is also cartesian since it is manifestly a composition of cartesian arrows.  Since $i = jk'$ implies $Fi = (Fj)(Fk')$ and $\u{F}k'=\Id$, we know $Fk = Fk'$ by the uniqueness property of the cartesian arrow $Fj$.  But then we can conclude $k = k'$ since $F$ is a groupoid fibration and we have: $i = jk$, $i = jk'$, $Fk = Fk'$.  \end{proof}

\noindent \emph{Proof of the Descent Lemma.} For the ``only if" implication, note that the conditions are invariant under equivalences in the $2$-category $\CFG / \LogSch$, so we just need to prove that they hold in $(\X, \M)$ for any $$(\M : \X \to \Log) \in \LogCFG / \Sch.$$  This is exactly what we proved in Lemma~\ref{lem:thecriterion}.  

For the ``if" implication, we suppose the two conditions hold, then proceed in five steps.

\noindent {\bf Step 1.}  We prove that $ \u{\M} = \u{F} : \Z^m \to \Sch$ is a groupoid fibration.  Given an $\Sch$ morphism $\u{f} : \u{Z} \to \u{W}$ and $w \in \Z^m$ with $\u{F}w=\u{W}$, first lift $\u{f}$ to a $\Log$ morphism $f : Z \to Fw$ using the fact that $\Log \to \Sch$ is a groupoid fibration, then lift $f$ to an $\Z$ morphism $\ov{f} : z \to w$ using the fact that $F$ is a groupoid fibration.  Then by \eqref{strict}, $z \in \Z^m$ because $w \in \Z^m$ and $F \ov{f} = f$ is cartesian.  The essential uniqueness of liftings of $\u{f}$ to $\Z^m$ with codomain $w$ is immediate from Lemma~\ref{lem:minimality2}.

\noindent {\bf Step 2.}  We construct a $\CFG / \LogSch$ 1-morphism $\psi : (\Z^m , \M = F) \to \Z$ as follows.  For each $\LogSch$ morphism $f : X' \to X$ with $\u{f} = \Id_{\u{X}}$, and each $x \in \Z^m_X$, choose some $\Z$ morphism $f_x : z_x \to x$ with $F f_x = f$ (such a choice, called a \emph{cleavage}, exists since $F$ is a groupoid fibration).  Now define $\psi$ on objects by \be \psi(x, f : X \to \M x) & \mapsto & z_x = \Dom f_x . \ee  Given a morphism $$(a,b) : (x,f:X \to \M x) \to (y, g : Y \to \M y), $$ define $\psi(a,b) : z_x \to z_y$ to be the unique $\Z$ morphism with $F \psi(a,b) = b$ and making the $\Z$ diagram $$ \xym{ z_x \ar[r]^{f_x} \ar@{.>}[d]_{\psi(a,b)} & x \ar[d]^a \\ z_y \ar[r]^{g_y} & y } $$ commute. (It is a well-known exercise with groupoid fibrations to see that there is a unique such morphism so that the resulting $\Z$ diagram lies over the diagram $$ \xym@C+15pt{ X  \ar[r]^-{f = Ff_x} \ar[d]_b & \M x = F x \ar[d]^{ \M a = Fa} \\ Y \ar[r]^-{g = F g_y} & \M y = F y } $$ in $\LogSch$.)  

\noindent {\bf Step 3.}  We construct a $\CFG / \LogSch$ 1-morphism $\phi : \Z \to (\Z^m , \M = F)$ as follows.  For each $z \in \Z$ choose a $\Z$ morphism $f^m_z : z \to x_z$ with $x_z \in \Z^m$ and $\u{F}f^m_z = \Id$ using \eqref{enoughminimals} and set $$\phi(z) := (x_z, F f^m_z : F z \to F x_z = \M x_z) .$$  To define $\phi$ on an $\Z$ morphism $h : z \to w$, we use Lemma~\ref{lem:minimality1} and the fact that $x_w \in \Z^m$ is minimal (and $\u{F} f^m_z = \Id$) to complete the diagram $$ \xym{ z \ar[r]^{f^m_z} \ar[d]_h & x_z \ar@{.>}[d]^k \\ w \ar[r]^{f^m_w} & x_w }$$ (uniquely), then set $$ \phi(h) := ( k, Fh) : (x_z, F f^m_z : F z \to Fx_z = \M x_z) \to (x_w, F f^m_w : Fw \to Fx_w ) . $$ The uniqueness of the completion ensures that this respects compositions.

\noindent {\bf Step 4.}  We construct an invertible $\CFG / \LogSch$ $2$-morphism $\eta : \phi \psi \to \Id_{ (\Z^m, \M)}$ as follows.  Note \be \phi \psi(x, f : X \to \M x) & = & \phi(z_x) \\ & = & (x_{z_x}, F f^m_{z_x} : F z_z \to F x_{z_x} ) , \ee so define $$\eta(x,f : X \to \M x) : \phi \psi(x,f : X \to \M x) \to (x, f : X \to \M x)$$ to be $$(k, \Id) : (x_{z_x}, F f^m_{z_x} : F z_z \to F x_{z_x} ) \to (x, f : X \to \M x) $$ where $k$ is the unique map completing the $\Z$ diagram $$ \xym@C+15pt{ z_x \ar@{=}[d] \ar[r]^-{f^m_{z_x}} & x_{z_x} \ar@{.>}[d]^k \\ z_x \ar[r]^-{f_x} & x } $$ which then lies over the $\LogSch$ diagram $$ \xym@C+15pt{ F z_x \ar@{=}[d] \ar[r]^-{F f^m_{z_x}} & F x_{z_x} \ar[d]^{Fk} \\ X \ar[r]^-f & \M x } $$ (such a unique completion $k$ exists because $x \in \Z^m$ is minimal and $\u{F}f^m_{x_z} = \u{F} f_x = \Id$; we don't even need Lemma~\ref{lem:minimality1} here).  Note that the map $k$ (hence also the map $\eta(x,f: X \to \M x) = (\Id,k)$) is invertible because $x_{z_x} \in \Z^m$ is also minimal.

\noindent {\bf Step 5.} We construct an invertible $\CFG / \LogSch$ $2$-morphism $\theta : \psi \phi \to \Id_{\Z}$ as follows: Note \be \psi \phi(z) & = & \psi(x_z, F f^m_z : Fz \to F x_z) \\ & = & z_{x_z} \\ & = & \Dom (f_{x_z} : z_{x_z} \to x_z) , \ee where $f_{x_z}$ was chosen so that $F f_{x_z} = F f^m_z$.  Since $F$ is a groupoid fibration we may define $\theta(z) : \psi \phi(z) \to z$ to be the unique $\Z$ morphism $\theta(z) : z_{x_z} \to z$ lying over the identity of $F z_{x_z} = Fz$ and making the $\Z$ diagram $$ \xym{ z_{x_z} \ar[rr]^{\theta(z)} \ar[rd]_{f_{x_z}} & & z \ar[ld]^{f^m_z} \\ & x_z } $$ commute (here we use that $F$ is a groupoid fibration).  Note that $\theta(z)$ is invertible because it is a morphism in a fiber category of $F$ and $F$ is a groupoid fibration.

\section{Examples} \label{section:examples}  All of our examples will be taken from log geometry, as developed in \cite{KK}.  Recall that a \emph{prelog structure} on a scheme $X$ is a map of sheaves of (commutative, unital) monoids $\alpha_X : \M_X \to \O_X$ ($\O_X$ is viewed as a monoid under multiplication) on the \'etale site of $X$.  We work exclusively in the \'etale site, so $\O_{X}$ is the structure sheaf on the \'etale site, a \emph{point} is a point of the \'etale site, a neighborhood is an \'etale neighborhood, and so forth. A morphism of prelog structures is a morphism of sheaves of monoids over $\O_X$.    A prelog structure is called a \emph{log structure} iff $\alpha_X | \alpha_X^{-1}(\O_X^*) : \alpha_X^{-1}(\O_X^*) \to \O_X^*$ is an isomorphism.  By abuse of notation, we always view $\O_X^*$ as a submonoid of a log structure $\M_X$, and we call the quotient $ \M_X / \O_X^*$ the \emph{characteristic monoid} and denote it $\ov{\M}_X$.  A log structure is called \emph{integral} iff $\M_X$ injects into its group envelope $\M_X^{\rm gp}$.  The inclusion of log structures into prelog structures has a left adjoint $$ \M_X \mapsto \M_X^{\a} := \M_X \oplus_{\alpha_X^{-1} \O_X^*} \O_X^*,$$ called the \emph{associated log structure}.  If the structure map $\alpha_X$ for the prelog structure $\M_X$ is not obvious (as will be the case in \S\ref{section:logpoints}), then we will write $(\M_X,\alpha_X)^{\a}$ for clarity.  A \emph{chart} for a log structure $\M_X$ consists of a finitely generated monoid $P$ together with a map $h : P \to \M_X(X)$ such that the map of prelog structures from $\alpha_X \u{h} : \u{P} \to \O_X$ to $\M_X$ induces an isomorphism on associated log structures.  A log structure is called \emph{coherent} iff it locally has a chart, and \emph{fine} iff it is coherent and integral.  A fine log structure is called \emph{fs} (fine and saturated) if it has charts where the monoid $P$ is integral and has the property: for any $q \in P^{\rm gp}$ with $nq \in P$ for some $n \in \ZZ_{>0}$, $q$ is in $P$.  

A \emph{log scheme} is a pair consisting of a scheme $X$ and a log structure $\M_X$ on $X$.  We typically denote this data by the single symbol $X$, reserving $\u{X}$ for the underlying scheme.  A \emph{morphism of log schemes} $f : X \to Y$ consists of a morphism $\u{f} : \u{X} \to \u{Y}$ between the underlying schemes together with a morphism $f^\dagger : f^* \M_Y \to \M_X$ of log structures on $\u{X}$.  Here $f^* \M_Y$ is the log structure associated to the prelog structure $f^\sharp (f^{-1} \alpha_Y) : f^{-1} \M_Y \to \O_X$.\footnote{ We should probably write $\u{f}^{-1}$ or even $\u{f}_{\acute{e}t}^{-1}$, but in general we will avoid underlining symbols when they clearly refer to structures from the underlying scheme or its \'etale site.}  The cokernel of $f^*$ is called the \emph{relative characteristic monoid} of $f$ and is denoted $\ov{\M}_{X/Y}$.  The morphism $f$ is called \emph{strict} iff $f^\dagger$ is an isomorphism.  The fact that $f$ (as a morphism between \'etale ringed topoi) is a local morphism ensures that $\ov{f^* \M_Y} = f^{-1} \ov{\M}_Y$, so the stalk of $f^\dagger$ at a point $x$ of $X$ is a morphism $\ov{f}^\dagger_x : \ov{\M}_{Y,f(x)} \to \M_{X,x}$.  As long as $\M_X, \M_Y$ are integral log structures, $f$ is strict iff each map $\ov{f}^\dagger_x$ is an isomorphism.  The forgetful functor $X \to \u{X}$ from log schemes to schemes is a fibered category where a morphism $f : X \to Y$ is cartesian iff it is strict.  A morphism $f$ of integral log schemes is called \emph{integral} iff $\ov{f}^\dagger_x$ is a monic, integral morphism (of sharp, integral monoids; see [KK 4.1(iv)] for the definition of an integral morphism) for every point $x$ of $X$.  The property of being integral (or coherent, fine, fs, \dots) is stable under the pullback operation $\M_X \mapsto f^* \M_X$, hence the full subcategory of integral (or coherent, \dots) log schemes is also fibered over schemes via the same forgetful functor.

\subsection{Log curves} \label{section:logcurves}  In this section, $\LogSch$ denotes the category of fs log schemes, fibered over $\Sch$ via the forgetful map $X \mapsto \u{X}$ retaining only the underlying scheme.

\noindent {\bf Definition.} (F.~Kato) A $\LogSch$ morphism $f : X \to Y$ is called a \emph{log curve} iff $f$ log smooth and integral (see [KK 3.3, \S 4] for the definitions of log smooth and integral morphisms; the last two conditions imply that $\u{f}$ is flat [KK 4.5]), $\u{f}$ is finitely presented\footnote{In some applications, it is best to say ``proper," but our study will be local in nature, so we do not require this.}, and every geometric fiber of $\u{f}$ is reduced and of pure dimension one.  Log curves form a category $\Z$ where a morphism $(f : X \to Y) \to (f' : X' \to Y')$ is a cartesian diagram $$ \xym{ X \ar[r] \ar[d] & X' \ar[d] \\ Y \ar[r] & Y' } $$ in $\LogSch$.\footnote{For general maps of integral log schemes, formation of fibered products in the category of integral log schemes does not commute with $X \mapsto \u{X}$, but it does commute when one of the maps is integral, as will always be the case in any fiber product diagram we consider.}  The obvious forgetful functor $(f : X \to Y) \mapsto Y$ makes $\Z$ a category fibered in groupoids over $\LogSch$. 

For a map of log schemes $f : X \to Y$, recall [KK \S 3] that the sheaf of log differentials $\Omega_{X/Y}$ is the quotient of $$\Omega_{\u{X} / \u{Y}} \oplus (\O_X \otimes_{\ZZ} \M_X^{\rm gp}) $$ by the relations: \begin{enumerate} \item $(d\alpha_X(m),0)-(0,\alpha_X(m) \otimes m)$ for $m \in \M_X$ \item $(0,1 \otimes f^\dagger(n))$ for $n \in f^{-1} \M_Y$. \end{enumerate}  We write $[db, c \otimes m]$ for the class of $(db, c \otimes m) \in \Omega_{\u{X} / \u{Y}} \oplus (\O_X \otimes_{\ZZ} \M_X^{\rm gp})$ in $\Omega_{X/Y}$, and for $m \in \M_X$, we set $\dlog m := [0, 1 \otimes m] \in \Omega_{X/Y}$ as usual.  The first relation ensures that $\alpha_X(m) \dlog m = d \alpha_X(m)$, where $d \alpha_X(m) \in \Omega_{\u{X}/\u{Y}}$ and we suppress the natural map $\Omega_{\u{X}/\u{Y}} \to \Omega_{X/Y}$.  For any point $x$ of $X$, there is a natural surjection \bne{logdiffsurj} k(x) \otimes_{\O_{X,x}} \Omega_{X/Y,x} & \to & k(x) \otimes_{\ZZ} \ov{\M}_{X/Y}^{\rm gp} \\ \nonumber a \otimes [db, c \otimes m] & \mapsto & a \ov{c} \otimes \ov{m} \ene which we will make use of in the proof of Theorem~\ref{thm:logcurves}, and which is used in the proof of [KK 3.5].

For a map of monoids $Q \to P$, we write $\Omega_{P/Q}$ as abuse of notation for the $\ZZ[P]$ module whose associated quasi-coherent sheaf $\Omega_{P/Q}^{\sim}$ is $\Omega_{\Spec (P \to \ZZ[P]) / \Spec (Q \to \ZZ[Q])}$.  The map \be 1 \otimes \dlog : \ZZ[P] \otimes_{\ZZ}  (P^{\rm gp}/Q^{\rm gp}) & \to & \Omega_{P/Q} \ee is an isomorphism [KK 1.8].  In particular, for $\Delta = (1,1) : \NN \to \NN^2$, $\Omega_{\NN^2/ \NN}$ is freely generated by $\dlog (1,0)$  (or $\dlog (0,1)$).

For any log smooth map $X \to Y$, the sheaf $\Omega_{X/Y}$ is locally free of finite type [KK 3.10], and for a log curve it is easily seen to be invertible (i.e.\ of rank one) using, for example, the Chart Criterion for Log Smoothness [KK 3.5] (c.f.\ Lemma~\ref{lem:dim} below).

Let $f : X \to Y$ be a log curve.  Recall the following results of Kato \cite{FK}: \begin{enumerate} \item \label{nodal} The underlying curve $\u{f} :\u{X} \to \u{Y} $ is nodal.\footnote{Theorem~1.1 in \cite{FK}.} \item \label{relchar} The stalk of the relative characteristic monoid $\ov{\M}_{X/Y,x}$ at any point $x$ of $\u{X}$ is either $0$, $\NN$, or $\ZZ$.\footnote{Lemma~1.2 and the last sentence on Page 219 in \cite{FK}.} \item \label{smooth} $\u{f}$ is smooth near $x$ unless $\ov{\M}_{X/Y,x} \cong \ZZ$, in which case $x$ is a node in $\u{f}^{-1}(\u{f}(x))$.\footnote{Claims~1,2,3 in \cite{FK}.} \end{enumerate}

We will end up reproving all of these results below because we need slightly refined versions.  The proofs are basically the same as Kato's.\footnote{There are a couple of places where Kato claims that one can use the characteristic monoid of a fine log structure as a chart \'etale locally.  I want to point out that this is not true (in positive characteristic), though it does not significantly effect Kato's arguments.  For example, suppose $k$ is any field where $k^*$ is not divisible.  (This can occur even if $k$ is separably closed.  For example, the indeterminate $x$ is not divisible by $p$ in the separable closure $k$ of the transcendental extension $\FF_p(x)$.)  Then there is a nontrivial extension $$0 \to k^* \to E \to \ZZ \oplus \ZZ / n \ZZ \to 0$$ for some positive integer $n$.  Let $P$ be the (fine, sharp) submonoid of $\ZZ \oplus \ZZ / n \ZZ$ generated by $(1,0),(1,1)$.  Note $P^{\rm gp} = \ZZ \oplus \ZZ / n \ZZ$.  Define an extension of monoids $$0 \to k^* \to Q \to P \to 0$$ by pullback.  Define a log structure $Q \to k$ by the identity on $k^* \subseteq Q$ and by $q \mapsto 0$ if $q \in Q \setminus k^*$.  This is a monoid homomorphism since $P$ is sharp, and the above monoid extension is the characteristic sequence for this log structure, but one cannot find a chart $h : P \to Q$, for then $h^{\rm gp}$ would split the original nontrivial extension of abelian groups. }

\begin{rem} The fact that we work with fine, \emph{saturated} log structures is needed only to conclude that $\ov{\M}_{X/Y}$ is saturated.  One may instead work in the category of all fine log schemes and simply add this condition in the definition of a log curve.  This has the advantage of simplifying various proofs, but has the disadvantage of looking rather artificial.  In any case, we will never need to know that the base $Y$ of the log curve is saturated.  Of course some saturation hypotheses are need if log smooth curves are to be nodal.  For example, if $P \subseteq \NN$ is the (fine!) submonoid generated by $2,3$, then $\Spec (P \to \ZZ[P])$ is log smooth and integral over $\Spec \ZZ$, but not nodal (it is a cusp).  It may be interesting to study integral, log smooth curves without saturation hypotheses. \end{rem}

In the next several lemmas, we will be considering a monomorphism $h : Q \into P$ of sharp (unit-free) monoids.  In this situation, an element $p \in P$ is called $Q$-\emph{primitive} iff, whenever $p = p'+q$ for some $q \in Q, p' \in P$, we have $q=0$.  We let $I_Q \subseteq P$ be the ideal of $P$ generated by the non-zero elements of $Q$.  That is: \be I_Q & := & (Q \setminus \{ 0 \} )+P \\ & = & \{ q+p : q \in Q \setminus \{ 0 \}, p \in P \}. \ee  We say that $p \in P$ is \emph{nilpotent for} $h$ iff $p \notin I_Q$ but $np \in I_Q$ for some $n \in \ZZ_{\geq 1}$.  We will need the following results, the first of which is F.~Kato's \emph{Integral Splitting Lemma} (we include a different proof):

\begin{lem} \label{lem:is} [FK, \S 1] Let $Q \into P$ be an integral monomorphism of fine, sharp monoids.  Then every element $p \in P$ can be uniquely written $p = p'+q$ for a $Q$-primitive $p' \in P$ and $q \in Q$.  In particular, every element of $P/Q$ has a unique $Q$-primitive representative in $P$. \end{lem}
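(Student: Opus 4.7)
The plan is to handle uniqueness by a direct application of the integrality hypothesis on $h$, and then establish existence by a well-foundedness argument on the divisibility order of $P$.

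For uniqueness, suppose $p = p_1 + q_1 = p_2 + q_2$ with each $p_i \in P$ being $Q$-primitive and each $q_i \in Q$. Rewriting the equality as $q_1 + p_1 = q_2 + p_2$ in $P$ and invoking the definition of an integral morphism [KK 4.1(iv)] yields $q_1', q_2' \in Q$ and $r \in P$ with $p_1 = q_1' + r$, $p_2 = q_2' + r$, and $q_1 + q_1' = q_2 + q_2'$ in $Q$. The $Q$-primitivity of $p_1$ applied to the decomposition $p_1 = q_1' + r$ forces $q_1' = 0$ (and so $r = p_1$); symmetrically $q_2' = 0$ and $r = p_2$, so $p_1 = r = p_2$. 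Substituting back into $q_1 + q_1' = q_2 + q_2'$ gives $q_1 = q_2$.

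For existence, the key input is that the divisibility preorder on the fine sharp monoid $P$ is well-founded. Indeed, embed $P \hookrightarrow P^{\rm gp}$ and project to $\ZZ^n := P^{\rm gp}/(\text{torsion})$; the image of $P$ is a finitely generated submonoid $M$, and the rational cone $C := \RR_{\geq 0} \cdot M \subseteq \RR^n$ is strongly convex because sharpness of $P$ forces $M \cap (-M) = 0$. Hence the order ideal $\{p' \in P : p' \leq p\}$ projects into the bounded set $C \cap (p - C) \subset \RR^n$, and the map $P \to M$ has fibers of size at most $|\text{torsion}|$, so the order ideal is finite. Granting this, fix $p \in P$ and form $S_p := \{p' \in P : p - p' \in Q\}$, which is nonempty since $p \in S_p$. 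Choose a divisibility-minimal element $p^* \in S_p$, and let $q^* \in Q$ with $p = p^* + q^*$. If $p^*$ were not $Q$-primitive, write $p^* = p'' + q$ with $q \in Q \setminus \{0\}$ and $p'' \in P$; cancellativity together with $q \neq 0$ gives $p'' < p^*$ strictly, and $p = p'' + (q + q^*)$ exhibits $p'' \in S_p$, contradicting minimality. Thus $p^*$ is $Q$-primitive and $p = p^* + q^*$ is the desired decomposition.

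The substantive obstacle is the well-foundedness step, which is where sharpness and finite generation of $P$ both enter crucially; the integrality of $h$ does no work in establishing existence but is indispensable for uniqueness.
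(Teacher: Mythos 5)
Your proof is correct and follows essentially the same route as the paper's: existence via a divisibility-minimal element (using that a fine, sharp monoid admits no infinite strictly decreasing divisibility chains) and uniqueness via the integrality condition of [KK 4.1(iv)]. The only differences are organizational: you take the minimal element inside $\{p' \in P : p - p' \in Q\}$ rather than inside a fiber of $P \to P/Q$, so integrality enters only in the uniqueness step (the paper also invokes it for existence), and you spell out the well-foundedness/finiteness argument via the strongly convex cone, which the paper simply asserts from sharpness and finite generation.
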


\begin{proof} For $p_1,p_2 \in P$, declare $p_1 \leq p_2$ iff there is a $p \in P$ such that $p_1+p=p_2$.  Since $P$ is sharp and finitely generated, there are no infinite strictly $\leq$-decreasing sequences in $P$, so for each $p \in P$, there is some $p'$ with the same image as $p$ in $P/Q$, such that $p'$ is $\leq$-minimal with respect to this property; this obviously implies that $p'$ is $Q$-primitive.   Since $p,p'$ have the same image in $P/Q$, it follows from integrality that we can write $p = p''+q, p'=p''+q'$ for some $p'' \in P, q,q' \in Q$.  Since $p'$ is $Q$-primitive, we must have $q'=0$, so $p' = p''$ and $p = p' +q$ is the desired expression and the same reasoning shows that this is the unique such expression. \end{proof}

We will also need to know that primitivity is ``stable under pushout" in the following sense:

\begin{lem} \label{lem:pushout} Let $h : Q \to P$ be an integral monomorphism of sharp, integral monoids, satisfying the conclusion of Lemma~\ref{lem:is} and let $f : Q \to R$ be any morphism to a sharp, integral monoid.  Then the pushout $P \oplus_Q R$ is a sharp, integral monoid, and the pushout map $R \to P \oplus_Q R$ is an integral monomorphism (with the same cokernel as $h$) satisfying the conclusion of Lemma~\ref{lem:is}.  Furthermore, for any $Q$-primitive $p \in P$, $[p,0] \in P \oplus_Q Q$ is $R$-primitive.  \end{lem}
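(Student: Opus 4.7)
The plan is to exploit the canonical splitting from Lemma~\ref{lem:is} to construct an explicit set-theoretic model of the pushout and then read off every assertion of the lemma from that model. Since every $p \in P$ factors uniquely as $p = p_\pi + h(p_Q)$ with $p_\pi \in P^{\rm prim}$ ($Q$-primitive) and $p_Q \in Q$, I would define
$$ M := P^{\rm prim} \times R, \qquad (a, r) \oplus (b, s) := \bigl( (a + b)_\pi,\; r + s + f((a + b)_Q) \bigr), $$
with $a + b$ computed inside $P$. Commutativity and the unit $(0,0)$ are immediate, and associativity would be checked by iterating the decomposition: both bracketings of $a + b + c$ express the unique primitive/residue decomposition of the triple sum in $P$, modulo pushing the $Q$-residue into $R$ via $f$.

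Next I would identify $M$ with $P \oplus_Q R$ by verifying the universal property. The canonical maps $\iota_P(p) := (p_\pi, f(p_Q))$ and $\iota_R(r) := (0, r)$ are monoid morphisms agreeing on $Q$ (both send $q$ to $(0, f(q))$), and any compatible pair $u : P \to N$, $v : R \to N$ factors uniquely through $M$ as $(a, r) \mapsto u(a) + v(r)$; the compatibility $u h = v f$ is exactly what makes this a monoid morphism in the $(a,r)$ variable. From this model the remaining claims become readable. The map $\iota_R$ is monic by projection to the second coordinate, and its cokernel as a pointed set is $P^{\rm prim}$, which by Lemma~\ref{lem:is} coincides with $P/h(Q)$. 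Integrality of $\iota_R$ (and of $M$) reduces, via the coordinate splitting, to integrality of $h$ combined with cancellation in $R$; sharpness of $M$ uses sharpness of $P$, sharpness of $R$, and integrality of $h$ simultaneously. Finally, every $(a, r) \in M$ splits as $(a, r) = (a, 0) \oplus \iota_R(r)$, so the Lemma~\ref{lem:is} conclusion relative to $\iota_R$ amounts to the last claim that $(a, 0) = [p, 0]$ is $R$-primitive whenever $p \in P$ is $Q$-primitive; this is immediate, since if $(p, 0) = (b, s) \oplus (0, r)$ the first coordinate forces $b = p$ (both being $Q$-primitive) and the second gives $s + r = 0$, hence $r = 0$ by sharpness of $R$.

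The hard part will be verifying associativity of $\oplus$ and the universal property carefully: both depend on integrality of $h$ to make the primitive/residue decomposition behave coherently under iterated addition, and both require some bookkeeping. Once those foundations are in place, the remaining assertions — sharpness, integrality of $\iota_R$, the cokernel computation, the Lemma~\ref{lem:is} property, and $R$-primitivity of $[p, 0]$ — follow by direct inspection of the model.
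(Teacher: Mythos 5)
Your construction is correct in its essentials, but it takes a genuinely different route from the paper. The paper does not use the splitting to build the pushout at all: it invokes the standard consequence of integrality of $h$ that $P \oplus_Q R$ is the quotient of $P \oplus R$ by the explicit relation $(p,r) \sim (p',r')$ iff $(p+q,r+f(q')) = (p'+q',r'+f(q))$ for some $q,q' \in Q$, observes that the cokernels of $h$ and of $R \to P\oplus_Q R$ agree for purely formal reasons (colimits commute with colimits), and leaves the splitting and primitivity claims as easy consequences of that description. You instead use the hypothesis that $h$ satisfies the conclusion of Lemma~\ref{lem:is} to build a normal-form model $P^{\rm prim} \times R$ and verify the universal property directly. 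Your bookkeeping does go through: uniqueness of the primitive decomposition gives $(a+b+c)_\pi = ((a+b)_\pi + c)_\pi$ and $(a+b+c)_Q = ((a+b)_\pi+c)_Q + (a+b)_Q$, which is exactly what associativity and the homomorphism property of $\iota_P$ need, and the compatibility $uh=vf$ is what makes $(a,r) \mapsto u(a)+v(r)$ additive. From the model, injectivity of $\iota_R$, the cokernel identification, integrality of $\iota_R$, the Lemma~\ref{lem:is} property for $\iota_R$, and $R$-primitivity of $[p,0]$ all drop out as you say --- arguably more transparently than from the paper's description, and without invoking Kato-integrality of $h$ except through the splitting hypothesis.

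The one place where ``direct inspection'' does not work is sharpness of $P \oplus_Q R$, and the difficulty is not yours alone: the assertion fails in the stated generality, for your model and for the paper's. Take $Q = \NN$, $P = \NN^2$, $h = \Delta$, $R = 0$: all hypotheses hold, yet $P \oplus_Q R \cong \ZZ$, which is not sharp; in your model the classes of $(1,0)$ and $(0,1)$ are mutually inverse because $((1,0)+(0,1))_\pi = 0$. Your sketched argument --- a sum equal to the unit forces the $R$-components to vanish --- only yields $r = s = 0$ and $f((a+b)_Q) = 0$; to conclude $a = b = 0$ you need $(a+b)_Q = 0$, which requires an additional hypothesis such as $f^{-1}(0) = \{0\}$ (this holds in the paper's applications, e.g.\ $f = q_0 : \NN \to Q$ with $q_0 \neq 0$ in Lemma~\ref{lem:monoids}\eqref{pushout}, and for characteristic monoids, which are sharp for free). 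So either add that hypothesis for the sharpness claim or drop the claim; everything else in your argument stands.
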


\begin{proof}  The map $p \mapsto [p,0]$ induces an isomorphism on cokernels for formal reasons (direct limits commute amongst themselves).  Due to the integrality assumption on $h$, $P \oplus_Q R$ is integral, $R \to P \oplus_Q R$ is an integral morphism, and $P \oplus_Q R$ is the quotient of $P \oplus R$ by the monoidal equivalence relation $\sim$ where $(p,r) \sim (p',r')$ iff there are $q,q' \in Q$ with $$(p+q, r+f(q')) = (p'+q', r'+f(q)).$$  All the statements follow easily from this. \end{proof}

\begin{lem} \label{lem:monoids} Let $h : Q \hookrightarrow P$ be an integral monomorphism of fine, sharp monoids, without nilpotents.  Then: \begin{enumerate} \item \label{torsionfree} If $P$ saturated, then the quotient $P^{\rm gp} / Q^{\rm gp}$ is torsion free, and, at least when $P^{\rm gp} / Q^{\rm gp} \cong \ZZ$, the quotient $P/Q$ is saturated (hence is isomorphic to $0,\NN,$ or $\ZZ$). \item \label{splitting} If $P/Q \cong \NN$, then there is a unique $p \in P$ such that $(h,p) : Q \oplus \NN \to P$ is an isomorphism. \item \label{pushout} If $P/Q \cong \ZZ$, then there is a unique $q_0 \in Q$ and $p_1,p_{-1} \in P$ (unique up to $p_1 \leftrightarrow p_{-1}$) such that the diagram below is cocartesian.  $$\xymatrix@C+15pt{ \NN \ar[r]^-{q_0} \ar[d]_\Delta & Q \ar[d]^h \\ \NN \oplus \NN \ar[r]^-{p_1,p_{-1}} & P }$$  \end{enumerate} \end{lem}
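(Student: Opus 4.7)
The plan is to exploit the unique integral-splitting decomposition of Lemma~\ref{lem:is}, together with the no-nilpotents hypothesis, to pin down the structure of $P$ over $Q$ in each of the three parts. The recurring trick is that a $Q$-primitive nonzero element of $P$ lies outside $I_Q$, so if any positive multiple of such an element landed in $I_Q$, it would violate the no-nilpotents hypothesis on $h$.

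For the torsion-freeness in (1), given a torsion class $[p] \in P^{\rm gp}/Q^{\rm gp}$ with $np \in Q^{\rm gp}$, I would write $np = q_1 - q_2$ with $q_i \in Q$, shift to $r := p + q_2 \in P^{\rm gp}$ so that $nr \in P$, and use $P$-saturation to conclude $r \in P$. Applying Lemma~\ref{lem:is} to $r$, the $Q$-primitive part $r'$ then satisfies $nr' \in Q^{\rm gp} \cap P$; the identity $Q^{\rm gp} \cap P = Q$, which follows from the uniqueness clause in Lemma~\ref{lem:is}, places $r'$ as a $Q$-primitive element with $nr' \in I_Q$, i.e.\ a nilpotent for $h$, forcing $r' = 0$ and hence $[p] = 0$. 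The saturatedness of $P/Q$ in the rank-one case follows from a similar lift-and-$P$-saturate move, combined with the elementary classification of saturated integral submonoids of $\ZZ$ generating $\ZZ$ as $\NN$, $-\NN$, or $\ZZ$.

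For parts (2) and (3), I would first establish a \emph{multiplicative primitivity} lemma: whenever $p \in P$ is $Q$-primitive and nonzero, $np$ is itself $Q$-primitive for every $n \geq 1$; otherwise the Lemma~\ref{lem:is} decomposition of $np$ would place $np$ in $I_Q$ and contradict the no-nilpotents hypothesis. In (2), taking $p$ to be the unique $Q$-primitive representative of $1 \in \NN = P/Q$ identifies the $Q$-primitive elements of $P$ with $\{np : n \geq 0\}$, and Lemma~\ref{lem:is} then immediately gives the isomorphism $(h,p) : Q \oplus \NN \xrightarrow{\sim} P$. In (3), taking $p_{\pm 1}$ to be the $Q$-primitive representatives of $\pm 1 \in \ZZ = P/Q$ and $q_0 := p_1 + p_{-1}$ (which lies in $Q$ via $P \cap Q^{\rm gp} = Q$ and is nonzero by sharpness of $P$), Lemma~\ref{lem:pushout} applied to $\Delta : \NN \to \NN^2$ and the map $\NN \to Q$ sending $1 \mapsto q_0$ produces canonical representatives $((a,0),q)$ and $((0,b),q)$ for elements of the pushout $\NN^2 \oplus_\NN Q$; these match bijectively with the $Q$-primitive decompositions in $P$ via $((a,0),q) \mapsto ap_1 + q$ and $((0,b),q) \mapsto bp_{-1} + q$. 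Uniqueness of $(q_0, p_1, p_{-1})$ up to $p_1 \leftrightarrow p_{-1}$ is immediate from uniqueness of $Q$-primitive representatives.

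The hardest step will be (1), specifically keeping the torsion-freeness argument honest: verifying the identity $Q^{\rm gp} \cap P = Q$ and ensuring the element $r'$ one produces is genuinely nonzero (so as to exhibit an actual nilpotent) both require small but careful applications of Lemma~\ref{lem:is}. The pushout analysis in (3) is routine given Lemma~\ref{lem:pushout}, and (2) is essentially immediate from multiplicative primitivity.
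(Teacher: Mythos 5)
Your proposal is correct, and for parts (2) and (3) it is essentially the paper's own argument: the unique $Q$-primitive decomposition of Lemma~\ref{lem:is}, your ``multiplicative primitivity'' observation (which is exactly the paper's step that $p_{mt}=tp_m$, else $p_m$ would be nilpotent), and Lemma~\ref{lem:pushout} both to put elements of $\NN^2\oplus_\NN Q$ in canonical form and to get uniqueness of $(q_0,p_1,p_{-1})$. Where you genuinely diverge is part (1), and your route is cleaner. For torsion-freeness the paper takes two elements $p_1,p_2\in P$, passes to primitives and runs a case analysis on $np_i=a_i+b$; you instead use saturation of $P$ at the outset to move the torsion class into $P$ (indeed $n(p+q_2)=q_1+(n-1)q_2\in Q$, so $r:=p+q_2\in P$) and then kill its primitive part with the no-nilpotents hypothesis; the auxiliary identity $Q^{\rm gp}\cap P=Q$ does follow from Lemma~\ref{lem:is} as you claim (decompose $x=x'+q$ and compare with the decomposition $q_1=0+q_1$ of $q_1=x+q_2$). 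More strikingly, your lift-and-saturate proof of saturatedness of $P/Q$ goes through and is simpler and more general than the paper's: lifting $x\in(P/Q)^{\rm gp}=P^{\rm gp}/Q^{\rm gp}$ to $\xi\in P^{\rm gp}$ and $nx$ to $p\in P$, writing $n\xi-p=q_1-q_2$ and noting $n(\xi+q_2)\in P$, saturation of $P$ alone gives $x\in P/Q$ --- no appeal to no-nilpotents or to $P^{\rm gp}/Q^{\rm gp}\cong\ZZ$ is needed, whereas the paper argues this case via relatively prime elements of $P/Q\subseteq\ZZ$ and the relation $p_{amm}=p_{bmn}+p_m$ enforced by no-nilpotents. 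Two small points to spell out when you write it up: in (2), uniqueness of $p$ needs the one-line remark that any splitting element is necessarily $Q$-primitive and maps to $1\in P/Q$; and in (3), the elements $[(a,0),0]$, $[(0,b),0]$ are not just $Q$-primitive in $\NN^2\oplus_\NN Q$ (Lemma~\ref{lem:pushout}) but, hitting each class of the cokernel $\ZZ$ exactly once, are \emph{all} of its primitives, so your canonical forms exhaust the pushout.
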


\begin{proof} \eqref{torsionfree} Suppose $P^{\rm gp} / Q^{\rm gp}$ has nontrivial torsion, so there are $p_1,p_2 \in P$ such that $p_1-p_2 \notin Q^{\rm gp}$ but $np_1-np_2 \in Q^{\rm gp}$ for some $n \in \ZZ_{>0}$, i.e. $$np_1+q_1 = np_2+q_2$$ for some $q_1,q_2 \in Q$.  By Lemma~\ref{lem:is}, we can find $Q$-primitive elements $p_1', p_2'$ with the same image in $P/Q$ as $p_1,p_2$, so after possibly replacing $p_i$ with $p_i'$, we can assume $p_1,p_2$ are $Q$-primitive.  Again by Lemma~\ref{lem:is}, we can write \be np_1 & = & a_1+b \\ np_2 & = & a_2+b \ee for some $a_1,a_2 \in Q$, and some $Q$-primitive $b \in P$ (i.e.\ $b$ is the unique $Q$-primitive representative of the common image of $np_1, np_2$ in $P/Q$).  If $a_1,a_2=0$, then we have $n(p_1-p_2)=0$ and $n(p_2-p_1)=0$ in $P^{\rm gp}$, hence $p_1-p_2, p_2-p_1 \in P$ because $P$ is saturated.  But then $p_1-p_2 = 0$ because $P$ is sharp, and this certainly contradicts $p_1-p_2 \notin Q^{\rm gp}$.  So it must be that one of $a_1,a_2$, say $a_1$, is nonzero.  Then $np_1 = a_1+b$ is manifestly in $I_Q$.  Now, if $p_1$ were not in $I_Q$, then it would be nilpotent, so we can now assume that $p_1$ is in $I_Q$.  Then, since $p_1$ is $Q$-primitive, it must actually be that $p_1 \in Q$.  But then $np_1$ is certainly zero in $P/Q$, so we must have $b=0$, hence $np_2 = a_2$ is in $Q$, and, in particular, $np_2 \in I_Q$.  I claim that $p_2$ is not in $I_Q$, hence is nilpotent (a contradiction).  Indeed, if $p_2$ were in $I_Q$, then in fact $p_2$ would be in $Q$ because it is $Q$-primitive, and this would violate $p_1-p_2 \notin Q^{\rm gp}$.  

It remains to prove that $P/Q$ is saturated when $P^{\rm gp}/Q^{\rm gp} \cong \ZZ$.  Choose an identification $P^{\rm gp}/Q^{\rm gp} \cong \ZZ$, so we can view $P/Q$ as a submonoid of $\ZZ$.  Since its groupification is $\ZZ$, we must be able to find $m,n \in P/Q \subseteq \ZZ$ relatively prime.  Then we can find $a,b \in \NN$ such that $am-bn=\pm 1$.  Now, for $m \in P/Q \subseteq \ZZ$, let $p_m \in P$ denote the unique $Q$-primitive lift of $m$.  Then for any $t \in \NN$ we must have $tp_m = p_{mt}$, otherwise $p_m$ would be nilpotent.  Consider the element $ap_m-bp_n \in P^{\rm gp}$.  If we can show that this element is in $P$, then we are done because this element maps to $\pm 1 \in \ZZ$, so we would have $\pm 1 \in P/Q$ and any submonoid of $\ZZ$ containing $\pm 1$ is clearly saturated.  Now, since $P$ is saturated, it is enough to show that $m(ap_m-bp_n) \in P$.  For this, it suffices to show that $amp_m=bmp_n+p_m$, or, equivalently, $p_{amm} = p_{bmn}+p_m$.  By Lemma~\ref{lem:is} we can write $$p_{bmn} + p_m = p_{amm}+q$$ for some $q \in Q$, so it is enough to prove $q=0$.  But if $q$ were not zero, then $p_m$ would be nilpotent since $$(bn+1)p_m = bnp_m+p_m = p_{bmn}+p_m = p_{amm}+q.$$  

\eqref{splitting} By Lemma~\ref{lem:is}, for each $n \in \NN$, there is a unique $Q$-primitive $p_n \in P$ mapping to $n$ in $P/Q \cong \NN$.  Using Lemma~\ref{lem:is}, it is clear that the set map $n \mapsto p_n$ provides a splitting iff it is a monoid homomorphism (iff $p_n = np_1$ for all $n \in \NN$, in which case $p=p_1$ is as desired).  Again by Lemma~\ref{lem:is}, we can write  $$p_1 = p_1 + 0, \; 2p_1 = p_2+q_2, \; 3p_1 = p_3 + q_3, \; \dots$$ for unique $q_i \in Q$, so our map is a monoid homomorphism iff all these $q_i$ are zero.  If one of them isn't, then for some $n > 1$ we have $np_1 \in I_Q$, even though $p_1$ itself if not in $I_Q$ because $p_1$ is $Q$-primitive but not in $Q$.  The uniqueness of the $p$ yielding such a splitting is clear from Lemma~\ref{lem:is} because such a $p$ must be $Q$-primitive and map to $1 \in P/Q = \NN$ since $(0,1)$ certainly has these properties in $Q \oplus \NN$. 

\eqref{pushout} By Lemma~\ref{lem:is}, for each $n \in \ZZ$, there is a unique $Q$-primitive $p_n \in P$ mapping to $n$ in $P/Q \cong \ZZ$ and we can write $p_1+p_{-1} = q_0$ for a unique $q_0 \in Q$ since $0 \in P$ is certainly the unique $Q$-primitive mapping to $0 \in \ZZ$ (i.e.\ $p_0=0$).  Define set maps $\NN \to Q$ by $n \mapsto nq_0$ and $\NN^2 \to P$ by $(m,n) \mapsto p_m+p_{-n}$.  By the same argument as in the previous proof, we must have $p_n = n p_1$ and $p_{-n}=n p_{-1}$, otherwise $p_1$ (or $p_{-1}$) will be nilpotent.  The uniqueness of $q_0,p_1,p_{-1}$ follows from Lemma~\ref{lem:pushout} and the fact that $(1,0),(0,1) \in \NN^2$ are the unique $Q$-primitive lifts of $1,-1 \in Q/P$ (for an approproate indentification $Q/P \cong \ZZ$; the ambiguity $p_1 \leftrightarrow p_{-1}$ results only from this choice of identification). Now at least the diagram commutes: $nq_0=n(p_1+p_{-1})=p_n+p_{-n}$, so we just need to check that the natural map \begin{eqnarray*} Q \oplus_\NN (\NN \oplus \NN) & \to & P \\ {\rm [}q,(m,n)] & \mapsto & q+p_m+p_{-n} \end{eqnarray*} is an isomorphism.  Surjectivity is clear since, according to the theorem, any $p$ can be written $p=p_n+q$ for some $n \in \ZZ$.  For injectivity, suppose $q+p_{m}+p_{-n} = q'+p_{m'}+p_{-n'}$ in $P$.  Then in particular, they map to the same element of the cokernel so we must have $m-n=m'-n'$.  After possibly exchanging the primed and unprimed terms, we may assume $m' \leq m$, so that $m-m' \in \NN$.  Since the two ways $\NN$ maps into the pushout have to agree, we have \begin{eqnarray*} {\rm [}q',(m',n')] + (m-m') & = & [q',(m,n)]  \\ {\rm [}q,(m,n)]+(m-m') & = & [(m-m')q_0+q,(m,n)].\end{eqnarray*}  Now we have $$q'+p_m+p_{-n} = (m-m')q_0+q+p_m+p_{-n} $$ in $P$ so it follows from integrality of $P$ and the uniqueness part of Lemma~\ref{lem:is} that $q'=(m-m')q_0+q$ in $Q$.  Since we know that the pushout monoid is integral (because $h$ is integral), we conclude the equality $[q',(p_{m'},p_{n'})]=[q,(p_m,p_n)]$ in the pushout from the integrality of the element $m-m'$. \end{proof}

\begin{lem} \label{lem:dim} Let $f : X \to Y$ be a log smooth morphism of fine log schemes, $x$ a geometric point of $X$.  Then the rank of the finitely generated abelian group $\ov{\M}_{X/Y,x}^{\rm gp}$ is no larger than the dimension of the geometric fiber $\u{f}^{-1}(\u{f}(x))$. \end{lem}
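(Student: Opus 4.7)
\noindent \emph{Proof sketch.} The plan is to invoke the Chart Criterion for Log Smoothness [KK 3.5] and then reduce the inequality to the classical fiber dimension theorem applied to a morphism of affine toric schemes. After passing to an \'etale neighborhood of $x$ and shrinking $\u{Y}$ as needed, [KK 3.5] produces a chart $(Q \to \M_Y,\, P \to \M_X,\, Q \to P)$ of $f$ with $P$, $Q$ fine, with $\ker(Q^{\rm gp} \to P^{\rm gp})$ finite of order invertible on $X$, and such that the induced morphism $g \colon \u{X} \to Y' := \u{Y} \times_{\Spec \ZZ[Q]} \Spec \ZZ[P]$ is classically smooth near $x$ of some relative dimension $s \geq 0$.

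To compare ranks, I would first note that a chart always induces a surjection of monoids onto the stalk of the characteristic, so $P \twoheadrightarrow \ov{\M}_{X,x}$ and $Q \twoheadrightarrow \ov{\M}_{Y, \u{f}(x)}$. A direct diagram chase then yields a surjection of group cokernels $P^{\rm gp}/Q^{\rm gp} \twoheadrightarrow \ov{\M}_{X/Y,x}^{\rm gp}$. Since $\ker(Q^{\rm gp} \to P^{\rm gp})$ is torsion, $\mathrm{rank}(P^{\rm gp}/Q^{\rm gp}) = \mathrm{rank}(P^{\rm gp}) - \mathrm{rank}(Q^{\rm gp})$, giving
$$\mathrm{rank}(\ov{\M}_{X/Y,x}^{\rm gp}) \;\leq\; \mathrm{rank}(P^{\rm gp}) - \mathrm{rank}(Q^{\rm gp}).$$

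For the fiber dimension, $\u{f}$ factors as $g$ followed by the projection $Y' \to \u{Y}$, so $\u{f}^{-1}(\u{f}(x)) = g^{-1}(Y'_{\u{f}(x)})$, and smoothness of $g$ of relative dimension $s$ near $x$ gives $\dim_x \u{f}^{-1}(\u{f}(x)) = s + \dim_{g(x)} Y'_{\u{f}(x)}$. Setting $k := k(\u{f}(x))$, the fiber $Y'_{\u{f}(x)} = \Spec(k[P] \otimes_{k[Q]} k)$ arises by base change from the morphism $\Spec k[P] \to \Spec k[Q]$, whose source and target are irreducible (as $P, Q$ are fine) of dimensions $\mathrm{rank}(P^{\rm gp})$ and $\mathrm{rank}(Q^{\rm gp})$ respectively. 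The standard fiber dimension theorem for finite-type morphisms of noetherian schemes then yields $\dim_{g(x)} Y'_{\u{f}(x)} \geq \mathrm{rank}(P^{\rm gp}) - \mathrm{rank}(Q^{\rm gp})$, and chaining the estimates gives the desired bound.

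The main technical nuisance I expect is the fiber dimension step when $\ker(Q^{\rm gp} \to P^{\rm gp})$ is nonzero: the morphism $\Spec k[P] \to \Spec k[Q]$ need not be dominant, and one must apply the dimension theorem to its factorization through the scheme-theoretic image $Z \subseteq \Spec k[Q]$, which is an irreducible closed subscheme (as $k[P]$ is a domain) of dimension $\leq \mathrm{rank}(Q^{\rm gp})$. This is routine but worth being careful about.
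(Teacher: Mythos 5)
Your overall strategy is the same as the paper's: invoke the Chart Criterion [KK 3.5], observe that a chart yields a surjection $P^{\rm gp}/Q^{\rm gp} \twoheadrightarrow \ov{\M}_{X/Y,x}^{\rm gp}$ (so its rank is at most $\operatorname{rank} P^{\rm gp}-\operatorname{rank} Q^{\rm gp}$, using that $\ker(Q^{\rm gp}\to P^{\rm gp})$ is finite), and then bound the fiber dimension from below by a dimension count on the monoid-algebra model. The execution of the dimension step differs: the paper first reduces to $\u{Y}=\Spec k$ with $k$ algebraically closed, takes $Q=\ov{\M}_{Y,y}$ as a chart (possible since $k^*$ is divisible) with $Q\into P$ monic, identifies an \'etale neighborhood of $x$ with something \'etale over $k\otimes_{\ZZ[Q]}\ZZ[P]=k[P]/I_Q^k$ and asserts its dimension outright; you stay over a general base and use the classical lower bound for fiber dimension at a point, which produces only the inequality ``$\geq$'' --- but that is all the lemma needs, and it spares you the reduction to a geometric point of $Y$ and any special choice of $Q$.

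The one step you should repair is the appeal to irreducibility: it is not true that $k[P]$ is a domain, nor that $\Spec k[P]$, $\Spec k[Q]$ are irreducible, merely because $P,Q$ are fine --- [KK 3.5] allows $P^{\rm gp}$ and $Q^{\rm gp}$ to have torsion, and e.g.\ $k[\NN\oplus\ZZ/2\ZZ]$ is not a domain when $\operatorname{char} k\neq 2$. This does not sink the argument. Since $P$ is integral, the monomials $\chi^p$ are nonzerodivisors in $k[P]$, so $\Spec k[P^{\rm gp}]$ is a dense open subscheme of $\Spec k[P]$; hence every irreducible component of $\Spec k[P]$ has dimension $\operatorname{rank} P^{\rm gp}$, and similarly $\dim \Spec k[Q]=\operatorname{rank} Q^{\rm gp}$. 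So either pass to an irreducible component (with reduced structure) through the point $g(x)$ and apply the fiber-dimension theorem to the dominant map onto its irreducible scheme-theoretic image in $\Spec k[Q]$, or simply quote the pointwise inequality $\dim_v(V_{\varphi(v)})\geq \dim_v V-\dim_{\varphi(v)}W$, valid for any morphism locally of finite type, which needs no irreducibility or dominance hypotheses. With that adjustment your proof is correct. (Also note [KK 3.5] actually gives $g$ \'etale, i.e.\ $s=0$; allowing $s\geq 0$ is harmless.)
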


\begin{proof} We can assume $\u{Y} = \Spec k$, $k$ algebraically closed, so we have a chart $Q \to \M_Y$ with $Q = \ov{\M}_{Y,\u{f}(x)}$.  By applying the Chart Criterion for log smoothness, we can find a monomorphism $Q \into P$ of fine monoids appearing in a chart for $f$ near $x$ such that an \'etale neighborhood of $x$ in $\u{X}$ is isomorphic to $k[P]/I_Q^k$ as in the previous proof.   But $\dim k[P]/I_Q^k$ equals the rank of $P^{\rm gp} / Q^{\rm gp}$, and this latter group surjects onto $\ov{\M}_{X/Y,x}^{\rm gp}$ since $P$ is a chart.   \end{proof}

Finally we are in a position to prove the basic structure theorem for log curves:

\begin{thm} \label{thm:logcurves} Let $f : X \to Y$ be a log curve, $x$ a point of $X$, $y := f(x)$.  Then: \begin{enumerate} \item \label{characteristic} $\ov{\M}_{X/Y,x}$ is isomorphic to $0,\NN$, or $\ZZ$.  \item \label{smooth} If $\ov{\M}_{X/Y,x} = 0$, then $f$ is strict near $x$ and $\u{f}$ is smooth near $x$. \item \label{N} If $\ov{\M}_{X/Y,x} = \NN$, then there is a unique $p \in \ov{\M}_{X,x}$ such that $$(\ov{f}^\dagger_x,p) : \ov{\M}_{Y,y} \oplus \NN \to \ov{\M}_{X,x} $$ is an isomorphism of monoids.  Furthermore, after possibly replacing $X$ with a neighborhood of $x$, there is a lift $\ov{p} \in \M_X(X)$ of $p$ such that $\dlog \ov{p}$ freely generates $\Omega_{X/Y}$, $$(f^\dagger,\ov{p}) : f^{-1} \M_Y \oplus \NN \to \M_X$$ induces an isomorphism on associated log structures, and \be \O_Y[t] & \to & \O_X \\ t & \mapsto & \alpha_X(\ov{p}) \ee is an \'etale map taking $x$ to the ``origin" in the fiber over $y$.  In particular, $\u{f}$ is smooth near $x$ and $f$ is strict away from the zero locus of $\alpha_X(\ov{p}) \in \O_X(X)$.  \item \label{Z} If $\ov{\M}_{X/Y,x} \cong \ZZ$, then there is a unique $q_x \in \ov{\M}_{Y,y}$, called the \emph{element smoothing} $x$, and elements $p_1,p_{-1} \in \ov{\M}_{X,x}$ (unique up to $p_1 \leftrightarrow p_{-1}$) such that $$\xymatrix@C+15pt{ \NN \ar[r]^-{q_x} \ar[d]_\Delta & \ov{\M}_{Y,y} \ar[d]^{\ov{f}^\dagger_x} \\ \NN^2 \ar[r]^-{p_1,p_{-1}} & \ov{\M}_{X,x} }$$ is a pushout diagram of monoids.  Furthermore, after possibly replacing $X,Y$ with neighborhoods of $x,y$, there are lifts $\ov{q} \in \M_Y(Y)$ of $q_x$ and $\ov{p}_1,\ov{p}_{-1} \in \M_X(X)$ such that $\ov{p}_1 + \ov{p}_{-1} = f^\dagger \ov{q}$, $$(f^\dagger, \ov{p}_1, \ov{p}_{-1}) : f^* \M_Y \oplus_{\NN} \NN^2 \to \M_X $$ induces an isomorphism on associated log structures, and such that \be \O_Y[x,y]/(xy-\alpha_Y(\ov{q})) & \to & \O_X \\ x & \mapsto & \alpha_X(\ov{p}_1) \\ y & \mapsto & \alpha_X(\ov{p}_{-1}) \ee is an \'etale map taking $x$ to the origin in the fiber over $y$.  In particular, $x$ is a node in its geometric fiber and $f$ is strict (and $\u{f}$ is smooth) away from the simultaneous zero locus of $\alpha_X(\ov{p}_1), \alpha_X(\ov{p}_{-1}) \in \O_X(X)$. \end{enumerate} \end{thm}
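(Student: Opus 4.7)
The plan is to use Kato's Chart Criterion for Log Smoothness [KK 3.5] to reduce everything to the monoid-theoretic statements already developed. Near $x$ and $y = f(x)$, choose a chart for $f$ consisting of an integral monomorphism $h : Q \into P$ of fine, sharp monoids with $Q \cong \ov{\M}_{Y,y}$ and $P \to \ov{\M}_{X,x}$ an isomorphism, such that an \'etale neighborhood of $x$ in $\u{X}$ is isomorphic to $\u{Y} \times_{\Spec \ZZ[Q]} \Spec \ZZ[P]$.  Under this identification $\ov{f}^\dagger_x$ is $h$, and the geometric fiber of $\u{f}$ through $x$ is \'etale locally $\Spec k[P]/I_Q k[P]$.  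The reducedness hypothesis then forces $h$ to have no nilpotents in the sense introduced before Lemma~\ref{lem:is}: a nilpotent $p \in P$ would yield a nonzero class $\ov{t^p} \in k[P]/I_Q k[P]$ satisfying $(\ov{t^p})^n = 0$.

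For part~\ref{characteristic}, Lemma~\ref{lem:dim} bounds the rank of $P^{\rm gp}/Q^{\rm gp} \cong \ov{\M}_{X/Y,x}^{\rm gp}$ by the fiber dimension of $\u{f}$, which is $1$; combining with Lemma~\ref{lem:monoids}\eqref{torsionfree} gives torsion-freeness, so $P^{\rm gp}/Q^{\rm gp}$ is $0$ or $\ZZ$, and in the $\ZZ$ case saturation of $P/Q$, forcing $P/Q \cong 0, \NN,$ or $\ZZ$.  For part~\ref{smooth}, when $P/Q = 0$ the injective integral monomorphism $h$ is automatically surjective, hence an isomorphism; this lifts to an isomorphism $f^\dagger$ near $x$ (the unit parts are already identified, and $\ov{f}^\dagger_x$ is an iso), so $f$ is strict near $x$, and log smoothness plus strictness delivers smoothness of $\u{f}$ directly from the chart criterion in the trivial case $Q = P$.

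For parts~\ref{N} and~\ref{Z}, apply Lemma~\ref{lem:monoids}\eqref{splitting} and~\eqref{pushout} respectively to realize $P$ as $Q \oplus \NN$ or as the pushout $Q \oplus_{\NN} \NN^2$, producing the unique $p$ (resp.\ $q_0, p_1, p_{-1}$) on the level of characteristic monoids.  Using that $\M_X \to \ov{\M}_X$ is an $\O_X^*$-torsor, lift each characteristic element to a section of $\M_X$ on a sufficiently small \'etale neighborhood of $x$; in the nodal case~\ref{Z}, after fixing arbitrary lifts $\ov{p}_1, \ov{p}_{-1}, \ov{q}$, the identity $\ov{p}_1 + \ov{p}_{-1} = f^\dagger \ov{q}$ can be arranged by multiplying $\ov{p}_1$ by the unique element of $\O_X^*$ measuring the discrepancy, since both sides have the same image in $\ov{\M}_{X,x}$ and $\M_X$ is integral.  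These lifts provide a chart for $\M_X$ of the predicted form, and a second application of the chart criterion delivers the \'etale map $\O_Y[t] \to \O_X$ (resp.\ $\O_Y[x,y]/(xy - \alpha_Y(\ov{q})) \to \O_X$) asserted in the theorem.  The freeness of $\dlog \ov{p}$ as a generator of $\Omega_{X/Y}$ follows from the isomorphism $\Omega_{P/Q} \cong \ZZ[P] \otimes_{\ZZ} (P^{\rm gp}/Q^{\rm gp})$ recalled before the theorem, and the strictness assertions away from the vanishing loci are immediate from the chart: where $\alpha_X(\ov{p})$ (or each $\alpha_X(\ov{p}_i)$) is a unit, the corresponding generator of $\M_X$ becomes a unit and the chart collapses onto the chart for $f^*\M_Y$.

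The main obstacle I expect is the initial step of upgrading the chart criterion to one in which $P \to \ov{\M}_{X,x}$ is genuinely an isomorphism rather than merely surjective, so that the sharp monoid-theoretic conclusions of Lemmas~\ref{lem:is}, \ref{lem:pushout}, and~\ref{lem:monoids} translate cleanly into assertions about $\ov{\M}_{X/Y,x}$ itself; this is a standard but delicate point in positive characteristic, as Kato's footnoted warning highlights.  A secondary difficulty is the bookkeeping in part~\ref{Z} needed to produce $\ov{p}_1, \ov{p}_{-1}, \ov{q}$ on a common \'etale neighborhood satisfying the additive relation on the nose; once this is done, the \'etale local descriptions drop out of the chart criterion with no further computation, and all the uniqueness clauses of the theorem are inherited from the corresponding uniqueness statements in Lemma~\ref{lem:monoids}.
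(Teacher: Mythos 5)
Your argument rests on an opening move that is not available: you ``choose a chart for $f$ consisting of an integral monomorphism $h : Q \into P$ \dots with $Q \cong \ov{\M}_{Y,y}$ and $P \to \ov{\M}_{X,x}$ an isomorphism, such that an \'etale neighborhood of $x$ is isomorphic to $\u{Y} \times_{\Spec \ZZ[Q]} \Spec \ZZ[P]$.'' The Chart Criterion for Log Smoothness produces \emph{some} chart $Q \into P$ with the \'etale factorization; it does not let you prescribe that $P \to \ov{\M}_{X,x}$ (or $Q \to \ov{\M}_{Y,y}$, over a general base) be an isomorphism, and the assumption that characteristic monoids can be used as charts is exactly the pitfall flagged in the paper's footnote about Kato's original argument (it genuinely fails in positive characteristic for fine log structures, and even when an exact chart of $\M_X$ alone exists, nothing guarantees that \emph{that} chart witnesses the \'etale factorization). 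You acknowledge this as ``the main obstacle,'' but it is not a standard fact to be cited away: the correct logic runs in the opposite direction, since it is only \emph{after} the structure theorem is proved that one knows charts of the predicted shape exist. The paper avoids the issue entirely: it base-changes to an algebraically closed field (where $\ov{\M}_{Y,y}$ does lift to a chart because $k^*$ is divisible), takes an \emph{arbitrary} chart $Q \into P$ from the Chart Criterion, and uses only the \emph{surjection} $P \to \ov{\M}_{X,x}$ to transfer a hypothetical nilpotent of $\ov{f}^\dagger_x$ up to $P$, contradicting reducedness of $k[P]/I_Q^k$; then Lemmas~\ref{lem:dim} and~\ref{lem:monoids} are applied directly to the nilpotent-free integral monomorphism $\ov{f}^\dagger_x : \ov{\M}_{Y,y} \to \ov{\M}_{X,x}$ at the level of characteristics, with no exact chart ever needed.

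A second, related gap is your claim that ``a second application of the chart criterion delivers the \'etale map'' $\O_Y[t] \to \O_X$ (resp.\ $\O_Y[x,y]/(xy-\alpha_Y(\ov{q})) \to \O_X$). The criterion asserts existence of some chart with the \'etale property, not that the specific chart $(f^\dagger,\ov{p})$ (resp.\ $(f^\dagger,\ov{p}_1,\ov{p}_{-1})$) built from your lifts has it. The paper instead shows $\dlog \ov{p}$ generates $\Omega_{X/Y}$ using the surjection \eqref{logdiffsurj} together with invertibility of $\Omega_{X/Y}$, and then argues that the induced map to $Y \times \Spec(\NN \to \ZZ[\NN])$ (resp.\ $Y \times_{\Spec(\NN \to \ZZ[\NN])} \Spec(\NN^2 \to \ZZ[\NN^2])$) is strict and log \'etale because it induces an isomorphism on log differentials, hence \'etale on underlying schemes. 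Your treatment of the unit adjustment giving $\ov{p}_1 + \ov{p}_{-1} = f^\dagger \ov{q}$ and the monoid-level uniqueness statements are fine and match the paper, but the two gaps above are load-bearing and the proposal as written does not close them.
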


\begin{proof} For \eqref{smooth}, note that the condition $\ov{\M}_{X/Y,x} = \Cok \ov{f}^\dagger_x = 0$ implies that the integral monomorphism $\ov{f}^\dagger_x : \ov{\M}_{X,y} \to \ov{\M}_{X,x}$ (of fine, sharp monoids) must be an isomorphism (use Lemma~\ref{lem:is} for surjectivity), so $f^* \M_Y \to \M_X$ induces an isomorphism on stalks of characteristics at $x$, hence is an isomorphism near $x$ by standard smallness properties of fine log structures.  It is a standard fact [KK 3.8] that a strict morphism is log smooth iff the underlying map is log smooth.

The next step is to prove that the map $\ov{f}_x : \ov{\M}_{Y,y} \to \ov{\M}_{X,x}$ (which is an integral monomorphism because the log curve $f$ is an integral morphism) does not have nilpotents.  We can assume $\u{Y}$ is the spectrum of an algebraically closed field $k$ (e.g.\ the algebraic closure of the separably closed field $k(y)$), in which case we can take a chart $Q \to \M_{Y}(Y)$ with $Q := \ov{\M}_{Y,y}$ (i.e.\ $\M_Y = Q \oplus k^*$ is the log structure associated to $0 : Q \to k$).\footnote{Any integral log structure on an algebraically closed field is of this form because $k^*$ is a divisible abelian group.}  We can also freely replace $X$ with an affine \'etale neighborhood $\Spec A$ of $x$, so by the Chart Criterion for Log Smoothness [KK, 3.5], we can assume there is a chart $$ \xym@C+10pt{ P \ar[r]^-{h} & \M_X(X) \ar[r]^-{\alpha_X} & A \\ Q \ar[u] \ar[r] \ar@/_1pc/[rr]_0 & \M_Y \ar[u]_{f^\dagger} \ar[r] & k \ar[u]_{f}  } $$ for the morphism $f$ with $Q \into P$ monic and such that $$k \otimes_{\ZZ[Q]} \ZZ[P] \to A $$ is \'etale.  In particular, the ring $k \otimes_{\ZZ[Q]} \ZZ[P]$ must be reduced, since $A$ is reduced because we assume the geometric fibers of a log curve are reduced and $\Spec A$ is \'etale over such a fiber.  Since $h$ is a chart, the natural map $$P \to \ov{\M}_{X,x} = P / \{ p \in P : \alpha_X(h(p)) \in \O_{X,x}^* \}$$ is a surjection of monoids under $Q$, so if there were some $p \in \ov{\M}_{X,x}$ not in \be I_Q' & := & (Q \setminus \{ 0 \}) + \ov{\M}_{X,x} \ee with $np \in I_Q'$ for some $n > 1$, then any lift $\ov{p} \in P$ of $p$ would be an element of $P$ not in \be I_Q & := & (Q \setminus \{ 0 \})+P \ee with $nm \in I_Q$.  But this would violate the fact that $k \otimes_{\ZZ[Q]} \ZZ[P]$ is reduced since this ring is the quotient of $k[P]$ by the ideal $I_Q^k$ consisting of those $\sum_i a_i[p_i]$ in $k[P]$ where $p_i \in I_Q$ whenever $a_i \in k^*$, so $[\ov{p}]$ would be a nontrivial nilpotent in $k[P]/I_Q^k$.  This proves the claim.

With the exception of the ``furthermore" statements in \eqref{N} and \eqref{Z}, all the statements now follow immediately from Lemma~\ref{lem:dim} and Lemma~\ref{lem:monoids}, applied to the nilpotent-free integral monomorphism of fine, sharp monoids $\ov{f}^\dagger_x : \ov{\M}_{Y,y} \to \ov{\M}_{X,x}$.  

For the ``furthermore" in \eqref{N}, note that we can certainly find, after possibly replacing $X$ with a neighborhood of $x$, some $\ov{p} \in \M_X(X)$ lifting $p \in \ov{\M}_{X,x}$.  The map $(f^\dagger,\ov{p}) : f^* \M_Y \oplus \NN \to \M_X$ becomes an isomorphism on associated log structures after possibly shrinking $X$ since it is an isomorphism on characteristics at $x$. Since the class of $p$ generates $\ov{\M}_{X/Y,x}^{\rm gp} \cong \ZZ$, the natural surjection $$k(x) \otimes_{\O_{X,x}} \Omega_{X/Y,x} \to k(x) \otimes_{\ZZ} \ov{\M}_{X/Y,x}^{\rm gp} $$ implies that $1 \otimes \dlog \ov{p}_x$ is a $k(x)$ basis for the one-dimensional $k(x)$ vector space $k(x) \otimes_{\O_{X,x}} \Omega_{X/Y,x}$.  But $\Omega_{X/Y}$ is an invertible sheaf, so this implies that, after possibly shrinking $X$, $\dlog p$ is a basis for $\Omega_{X/Y}$.  It remains to prove that $\u{f}$ is smooth near $x$.  This is local, so we can assume we have a chart $Q \to \M_Y(Y)$.  Then $(p,f^\dagger) : \NN \oplus Q \to \M_X(X)$ induces an isomorphism on stalks of characteristics at $x$, so we can assume, after possibly shrinking $X$, that it is a chart for $\M_X$.  Then we have a diagram $$ \xym{ X \ar[d]_f \ar[r] & \Spec (\NN \oplus Q \to \ZZ[\NN \oplus Q]) \ar[d] \\ Y \ar[r] & \Spec (Q \to \ZZ[Q]) } $$ where the horizontal arrows are strict.  Furthermore, since $\dlog p$ freely generates $\Omega_{X/Y} \cong \O_X$ and $\dlog (1,0)$ freely generates $\Omega_{\NN \oplus Q / Q}$, we conclude that the induced map from $X$ to $$Y \times_{\Spec (Q \to \ZZ[Q])} \Spec (Q \oplus \NN \to \ZZ[Q \oplus \NN]) = Y \times \Spec (\NN \to \ZZ[\NN])$$ is log \'etale (since it induces an isomorphism on sheaves of log differentials) and strict (by the 2-out-of-3 property of strict morphisms), hence the underlying morphism $\u{X} \to \u{Y} \times \AA^1$ is \'etale, hence the result.  

The ``furthermore" in \eqref{Z} is a jazzed up version of the same argument.  After shrinking $X$ and $Y$, we can assume we have lifts $\ov{q}, \ov{p}_1, \ov{p}_{-1}$ as indicated.  Since we know $\ov{f}^\dagger_x(q_x) = p_1+p_{-1}$, we can arrange that $$ \ov{p}_1 + \ov{p}_{-1} = f^\dagger \ov{q} $$ after possibly shrinking $X$ again and adjusting our choice of lift $\ov{p}_1$ by a unit.  The induced map $(f^\dagger,\ov{p}_1,\ov{p}_{-1}) : f^* \M_Y \oplus_{\NN} \NN^2 \to \M_X$ is an isomorphism after possibly shrinking $X$ because it induces an isomorphism on characteristics at $x$.  Since the image of $\ov{p}_1$ in $\M_{X/Y,x}^{\rm gp} \cong \ZZ$ is generator, we find, exactly as above, that $\dlog \ov{p}_1$ is a basis for $\Omega_{X/Y}$ after possibly shrinking $X$.  We can assume, as above, that we have a chart $Q \to \M_Y(Y)$ and that $(\ov{p}_1, \ov{p}_{-1}, f^\dagger) : \NN^2 \oplus_{\NN} Q \to \M_X(X)$ is a chart for $\M_X$.  We thus obtain a diagram $$ \xym@C+10pt{ X \ar[d]_f \ar[r]^-g & \Spec (Q \oplus_{\NN} \NN^2 \to \ZZ[Q \oplus_{\NN} \NN^2]) \ar[d] \\ Y \ar[r] & \Spec (Q \to \ZZ[Q]) } $$ where the horizontal arrows are strict and $$g^* \Omega_{ Q \oplus_{\NN} \NN^2 / Q} \to \Omega_{X/Y} $$ is an isomorphism (since $\dlog (0,1,0)$ is a basis for the former and $\dlog \ov{p}_1$ is a basis for the latter), hence the map from $X$ to $$Y \times_{\Spec (Q \to \ZZ[Q])} \Spec (Q \oplus_{\NN} \NN^2 \to \ZZ[Q \oplus \NN]) = Y \times_{\Spec (\NN \to \ZZ[\NN])} \Spec (\NN^2 \to \ZZ[\NN^2]) $$ is log \'etale and strict, so the underlying morphism of schemes $$\u{X} \to \u{Y} \times_{\AA^1} \AA^2 = \Spec_Y \O_Y[x,y]/(xy-\alpha_Y(\ov{q}))$$ is \'etale.     \end{proof}

\noindent {\bf Definitions.}  Let $f :X \to Y$ be a log curve, $y$ a point of $\u{Y}$, $N_y$ the set of nodes in the geometric fiber $\u{f}^{-1}(y)$ (equivalently, $N_y = \{ x \in \u{f}^{-1}(y) : \ov{\M}_{X/Y,x} \cong \ZZ \}$).  Define $q : N_y \to \ov{\M}_{Y,y}$ by mapping $x \in N_y$ to the element $q(x) \in \ov{\M}_{Y,y}$ smoothing $x$.  Let $P_y$ be the free monoid on $N_y$ and write $q : P_y \to \ov{\M}_{Y,y}$ as abuse of notation for the map corresponding to $q$ under the adjunction \be \Hom_{\Mon}(P_y, \ov{\M}_{Y,y}) & = & \Hom_{\Sets}(N_y,\ov{\M}_{Y,y}). \ee  Call $f : X \to Y$ \emph{basic} iff $q : P_y \to \ov{\M}_{Y,y}$ is an isomorphism for every $y$.

\begin{lem} \label{lem:strictminimal}  For any morphism of log curves $$ \xym{ X \ar[r] \ar[d] & X' \ar[d] \\ Y \ar[r]^f & Y' } $$  with $X' \to Y'$ basic, $X \to Y$ is basic iff $f$ is strict. \end{lem}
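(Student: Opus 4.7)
The plan is to compare the smoothing data of the two log curves at corresponding points and reduce basicness to strictness of $f$ via the characterization of strict morphisms for integral log structures.

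First, I would use that $X' \to Y'$ is integral to conclude that the cartesian square satisfies $\u{X} = \u{X}' \times_{\u{Y}'} \u{Y}$ at the level of underlying schemes. For any geometric point $y$ of $Y$ with image $y' := f(y)$ in $Y'$, this yields an identification of underlying fibers $\u{X}_y = \u{X}'_{y'}$; moreover, the cartesian square furnishes a pushout description $\ov{\M}_{X,x} \cong \ov{\M}_{X',x} \oplus_{\ov{\M}_{Y',y'}} \ov{\M}_{Y,y}$ at stalks of characteristics, from which one reads off that $\ov{\M}_{X/Y,x} \cong \ov{\M}_{X'/Y',x}$. By Theorem~\ref{thm:logcurves}\eqref{characteristic}, these common monoids are $0$, $\NN$, or $\ZZ$, and in particular $x$ is a node of $\u{X}_y$ iff it is a node of $\u{X}'_{y'}$. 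This gives canonical identifications $N_y = N_{y'}$ and hence $P_y = P_{y'}$ for every $y$.

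Second, I would verify that under this identification the smoothing maps satisfy $q = \ov{f}^\dagger_y \circ q'$, where $q'$ is the smoothing map for $X' \to Y'$. For $x \in N_y = N_{y'}$, Theorem~\ref{thm:logcurves}\eqref{Z} exhibits $\ov{\M}_{X',x}$ as the pushout $\ov{\M}_{Y',y'} \oplus_{\NN,\, q'(x)} \NN^2$; pushing out further along $\ov{f}^\dagger_y$ via Lemma~\ref{lem:pushout} exhibits $\ov{\M}_{X,x}$ as $\ov{\M}_{Y,y} \oplus_{\NN,\, \ov{f}^\dagger_y(q'(x))} \NN^2$. The uniqueness clause in Theorem~\ref{thm:logcurves}\eqref{Z} then forces $q(x) = \ov{f}^\dagger_y(q'(x))$.

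Finally, the commutative triangle $q = \ov{f}^\dagger_y \circ q'$, combined with the hypothesis that each $q'$ is an isomorphism, shows that $q$ is an isomorphism iff $\ov{f}^\dagger_y$ is an isomorphism. Demanding this for every $y$ translates precisely into the equivalence between basicness of $X \to Y$ (all $q$ are iso) and strictness of $f$ (all $\ov{f}^\dagger_y$ are iso, by the characterization of strict morphisms between integral log schemes recalled in the preamble to Section~\ref{section:examples}). The main obstacle I anticipate is step two: one must rigorously check that the iterated pushout actually reproduces the description of $\ov{\M}_{X,x}$ in Theorem~\ref{thm:logcurves}\eqref{Z} (in particular that the pushout remains sharp and saturated at the node, so that the uniqueness clause applies directly rather than merely up to saturation). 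Once this bookkeeping is confirmed, the rest of the argument is purely formal.
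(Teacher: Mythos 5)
Your proposal is correct and takes essentially the same route as the paper: the cartesian square (using that $X' \to Y'$ is integral) identifies $P_y$ with $P_{f(y)}$, the smoothing elements satisfy $q = \ov{f}^\dagger_y \circ q'$ by the uniqueness clause of Theorem~\ref{thm:logcurves}\eqref{Z}, and the equivalence then follows because a morphism of integral log structures is strict iff it induces isomorphisms on characteristics at all points. Your second step simply makes explicit what the paper's two-line proof leaves implicit (the same cocartesian-plus-uniqueness argument is carried out in the proof of Lemma~\ref{lem:basicimpliesminimal}), and the saturation bookkeeping you flag does go through, since $\ov{\M}_{Y,y}\oplus_{\NN}\NN^2$ is automatically sharp and saturated when $\ov{\M}_{Y,y}$ is, by Lemma~\ref{lem:pushout} and a direct check.
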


\begin{proof}  The fact that the square is cartesian means that, for any point $y$ of $\u{Y}$, the monoid $P_y$ is naturally identified with $P_{f(y)}$.  The result now follows from the fact that a morphism of integral log structures is an isomorphism iff it is an isomorphism on characteristics at each point. \end{proof}

The rest of the section is devoted to proving that basic log curves are the same thing as minimal log curves.

\begin{lem} \label{lem:basicimpliesminimal} A basic log curve is minimal. \end{lem}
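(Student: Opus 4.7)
The strategy is to verify the minimality lifting property of a basic log curve $z = (f:X\to Y)$ directly. Given $\Z$-morphisms $i : w' \to w$ and $j : w' \to z$ with $\u{F}i = \u{F}j = \Id$, the fact that log curves are integral morphisms makes pullbacks along base changes with underlying identity on schemes compatible with the underlying scheme functor (as noted in the definition of $\Z$), so all three log curves share a common underlying base $\u{S}$ and a common underlying total curve. Producing $k : w \to z$ with $\u{F}k = \Id$ thereby reduces to producing a log structure morphism $k^\dagger : \M_Y \to \M_{Y'}$ on $\u{S}$ whose corresponding log scheme morphism $Y' \to Y$ pulls $f$ back to $f'$, subject to $i^\dagger \circ k^\dagger = j^\dagger$.

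For both existence and uniqueness the main tool is that, since $z$ is basic, the characteristic $\ov{\M}_{Y,y}$ is freely generated by the smoothing elements $q_x \in \ov{\M}_{Y,y}$ of the nodes $x \in N_y$ (Theorem~\ref{thm:logcurves}\eqref{Z}). The uniqueness clause of that theorem, applied to the pulled-back local pushout diagram at each node under the morphisms $i$ and $j$, forces $\bar{i}^\dagger$ to send the smoothing element $q'_x$ in $\ov{\M}_{Y',y}$ to the smoothing element $q''_x$ in $\ov{\M}_{Y'',y}$, and forces $\bar{j}^\dagger$ to send $q_x \mapsto q''_x$. Consequently any candidate $\bar{k}^\dagger_y$ must send $q_x \mapsto q'_x$, and by freeness this specifies $\bar{k}^\dagger$ uniquely at the characteristic level. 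Existence at the characteristic level is immediate by defining $\bar{k}^\dagger$ by this formula on generators.

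To upgrade the unique characteristic-level map to an actual log structure morphism, I would use the explicit \'etale-local charts $\ov{q}, \ov{p}_1, \ov{p}_{-1}$ provided by Theorem~\ref{thm:logcurves}\eqref{N},\eqref{Z} near smooth and nodal points of the fibers: these charts present the log structures on $Y$ and $X$ (resp.\ $Y'$ and $X'$) as pushouts determined by lifts of the smoothing elements, so matching chosen lifts of $q_x$ and $q'_x$ yields a canonical local morphism $\M_Y \to \M_{Y'}$ compatible with $\M_X \to \M_{X'}$. Checking that this local morphism is cartesian (i.e.\ that the base change of $f$ is indeed $f'$) reduces to comparing two pushout presentations with matching characteristic-level data, and the compatibility $i^\dagger \circ k^\dagger = j^\dagger$ is automatic from the uniqueness of the chart data.

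The main obstacle I anticipate is the usual unit-valued gauge ambiguity when promoting a characteristic-monoid morphism to a log structure morphism, and the corresponding need to patch local lifts into a single global $k^\dagger$. I expect this ambiguity to be killed by the cartesian requirement built into morphisms of $\Z$: a unit-valued modification of $k^\dagger$ would twist $X \times_Y Y'$ so as to destroy its identification with the given $X'$, so at most one lift is admissible, and the already-constructed local lifts must agree on overlaps by their characterization in terms of smoothing elements. Once this rigidity is in hand, gluing is formal.
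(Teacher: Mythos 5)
Your overall skeleton matches the paper's: reduce to maps of log structures over a fixed underlying scheme, use basicness to identify $\ov{\M}_{Y,y}$ with the free monoid on the node smoothers, pin down the map on characteristics via the uniqueness clause of Theorem~\ref{thm:logcurves}\eqref{Z}, construct $k^\dagger$ \'etale locally from charts, and glue. But two of your key steps fail as stated. First, the gauge problem: you claim the unit ambiguity is killed by the cartesian requirement, i.e.\ that a unit-valued modification of $k^\dagger$ would destroy the identification of the pullback with the given total space. It would not: a basic log curve has nontrivial automorphisms over the identity of its underlying schemes, obtained precisely by twisting the smoothing elements by units and compatibly twisting the elements $\ov{p}_{\pm 1}$ upstairs, and such twists preserve cartesianness (this is the same phenomenon as the $G$-action in \S\ref{section:logpoints}). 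What actually rigidifies the completion is the constraint you wrote down in your reduction but then never used, namely $i^\dagger \circ k^\dagger = j^\dagger$ (together with its analogue on the total spaces): two completions agreeing on characteristics differ by units, and since $i^\dagger$ is the identity on $\O^*$ and the monoids are integral (cancellative), composing with $i^\dagger$ forces those units to be trivial. For the same reason, in the existence step the equality $i^\dagger \circ k^\dagger = j^\dagger$ is \emph{not} ``automatic from the uniqueness of the chart data'': arbitrary lifts of the smoothing elements satisfy it only on characteristics, and one must adjust the lifts by units (after shrinking) to get it on the nose --- this is exactly the adjustment the paper performs. Relatedly, your uniqueness at the characteristic level is deduced by applying the smoothing-element uniqueness to $i$ and $j$ and then cancelling $\ov{i}^\dagger$, which need not be injective; apply it instead directly to the (cartesian, hence cocartesian on characteristics) square of the candidate completion $k$ itself.

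Second, a morphism of log curves is a cartesian \emph{square}, so a completion is a pair: the base map $k^\dagger$ and a map $h^\dagger : \M_X \to \M_{X''}$ on total spaces, the latter subject to its own compatibility with the given maps $a,b$ out of the middle curve. Your sketch reduces everything to $k^\dagger$ and treats the upstairs map as coming for free from ``pulling back $f$,'' but it is genuine additional data whose existence and uniqueness must be proved; in the paper this requires a separate local analysis near each point of $\u{X}$ according to whether $\ov{\M}_{X/Y,x}$ is $0$, $\NN$, or $\ZZ$, plus a further unit adjustment ($t_1 \mapsto u t_1$, $t_{-1} \mapsto u^{-1} t_{-1}$) to preserve the relation $p_1 + p_{-1} = q_x$. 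Without this, and without the correct rigidity mechanism above, your gluing step is unjustified: the locally constructed lifts agree on overlaps only because completions are \emph{unique}, and that uniqueness is precisely what the triangle-plus-units argument, not cartesianness, provides.
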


\begin{proof}  Consider a solid cartesian diagram \bne{dia} & \xym@R-15pt{ X'' \ar@{.>}[rr]^h \ar[dd] & & X \ar[dd]  \\ & X' \ar[lu]^a \ar[ru]_b \ar[dd] \\ Y'' \ar@{.>}[rr]^<<<<<<<k & & Y \\ & Y' \ar[ru]_j \ar[lu]^i } \ene of fs log schemes where the vertical arrows are log curves, $\u{i} = \u{j} = \Id_{\u{Y}}$, and $X \to Y$ is basic.  We must prove that there are unique maps $k,h$ of fs log schemes completing the diagram (the ``new" square is then automatically cartesian).  Since the solid squares are cartesian and $\u{i} = \u{j} = \Id$, we can assume, after passing to an isomorphic solid diagram if necessary, that $\u{a} = \u{b} = \Id_{\u{X}}$ and that the the map on schemes underlying each of the three vertical arrows is the same map $\pi : \u{X} \to \u{Y}$. The maps $k,h$ must satisfy $\u{k} = \Id_{\u{Y}}$, $\u{h}=\Id_{\u{X}}$, so it equivalent to prove that there is a unique pair $(k^\dagger, h^\dagger)$ consisting of a map $k^\dagger : \M_Y \to \M_{Y''}$ of log structures on $\u{Y}$completing \bne{dia2} & \xym{ \M_{Y''} \ar[rd]_{i^{\dagger}} & &  \M_Y \ar@{.>}[ll]_{k^{\dagger}}  \ar[ld]^{j^{\dagger}} \\ & \M_{Y'} } \ene  and a map $h^\dagger : \M_X \to \M_{X''}$ completing the diagram \bne{dia3} & \xym@R-10pt{ \M_{X''} \ar[rd]_{a^\dagger} & &  \M_{X} \ar@{.>}[ll]_{h^\dagger}  \ar[ld]^{b^\dagger} \\ & \M_{X'}  \\ \M_{Y''} \ar[uu] \ar[rd] & & \M_Y \ar[uu] \ar[ld] \ar[ll]_<<<<<<<{k^\dagger} \\ & \M_{Y'} \ar[uu] } \ene (suppressing notation for $\pi^*$ in the bottom triangle) of log structures on $\u{X}$.  Since the squares in the solid diagram are cartesian, the three log curves we could use to define $N_y$ yield the same result (up to a natural bijection which we suppress).

We first argue that there is at most one such pair.  Consider a point $y$ of $Y$.  Since $X \to Y$ is basic, $P_y \to \ov{\M}_{Y,y}$ is an isomorphism, so $\ov{k}^\dagger_y : \ov{\M}_{Y,y} \to \ov{\M}_{Y'',y}$ is uniquely determined by the elements $\ov{k}^\dagger_y(q_x) \in \ov{\M}_{Y'',y}$ as $x$ runs over the nodes in the geometric fiber of $\pi$ over $y$.  But the completed left square (hence every square) in the diagram $$ \xym@C+10pt{ \ov{\M}_{X'',x} & \ar@{.>}[l]_-{\ov{h}^\dagger_x}  \ov{\M}_{X,x} & \ar[l]_-{p_{-1},p_1}  \NN^2 \\ \ov{\M}_{Y'',y} \ar[u] & \ar@{.>}[l]_-{\ov{k}_y^\dagger} \ov{\M}_{Y,y} \ar[u] & \ar[l]_-{q_x} \NN \ar[u]_\Delta }  $$ must be cocartesian (the right pushout square here is the one from Theorem~\ref{thm:logcurves}\eqref{Z}), so by the uniqueness statement in Theorem~\ref{thm:logcurves}\eqref{Z}, it must be that $\ov{k}^\dagger_y(q_x)$ is the element $q_x \in \ov{\M}_{Y'',y}$ smoothing $x$.  This proves that any two completions of \eqref{dia2} must agree on the level of characteristics, but then they must agree because $i^\dagger$ is invertible on units.  The fact that there can then be at most one $h^\dagger$ completing \eqref{dia3} is proved in much the same way (establish the uniqueness of $\ov{h}^\dagger_x$ based on appeal to the appropriate case of Theorem~\ref{thm:logcurves}). 

Now that we know there is at most one completion, we can construct $k^{\dagger}$ \'etale locally near a point $y$ of $\u{Y}$.    After possibly replacing $\u{Y}$ with a small enough \'etale neighborhood of $y$, we can assume $P_y \to \ov{\M}_{Y,y}$ lifts to a chart $s : P_y \to \M_Y(\u{Y})$ (since $X \to Y$ is basic) and $P_y \to \ov{\M}_{Y'',y}$ lifts to a map $t : P_y \to \M_{Y''}(\u{Y})$.  Now, since $s$ is a chart, we could use the map on associated log structures induced by $t$ as our $k^\dagger$ if only we knew that $ i^{\dagger}(\u{Y}) t = j^{\dagger}(\u{Y})s$.  But we do know these maps induce the same map on characteristics at $y$ (namely the natural map $P_y \to \ov{\M}_{Y',y}$), so after possibly shrinking to a smaller neighborhood of $y$ and adjusting the choice of lift of $q_x \in \ov{\M}_{Y'',y}$ to $t(x) \in \M_{Y''}(\u{Y})$ by a unit, we can arrange the desired equality.  

Next we have to construct the completion $h^\dagger$ near a point $x$ of $X$.  If the common relative characteristic monoid $$\ov{\M}_{X''/Y'',x} \cong \ov{\M}_{X'/Y',x} \cong \ov{\M}_{X/Y,x}$$ is zero, then by Theorem~\ref{thm:logcurves}\eqref{smooth} the vertical arrows of \eqref{dia3} are isomorphisms near $x$, so this is easy.  If the common relative characteristic at $x$ is $\NN$ or $\ZZ$, then that theorem says that, at least on characteristics at $x$, the diagram \eqref{dia3} is the sum of the diagram \eqref{dia2} previously dealt with and $\NN$, or the pushout of \eqref{dia2} along $\Delta: \NN \to \NN^2$.  We will explain how to complete \eqref{dia3} in the more difficult case where the relative characteristic is $\ZZ$, and leave the easier case of relative characteristic $\NN$ to the reader.  Set $y := \pi(x)$.  After shrinking $\u{Y}$ if necessary, we can assume we have a lift $h : P_y \to \M_Y(\u{Y})$ of the natural isomorphism $P_y \to \ov{\M}_{Y,y}$ and that $h$ is a chart for $\M_Y$.  Let $\ov{h} : P_y \to \M_X(\u{X})$ be the composition of $h$ and $\M_Y(\u{Y}) \to \M_X(\u{X})$.  After shrinking again if necessary, we can assume that we have lifts $\ov{p}_1, \ov{p}_{-1} \in \M_X(\u{X})$ of the distinguished elements $p_1,p_{-1} \in \ov{\M}_{X,x}$ as in Theorem~\ref{thm:logcurves}\eqref{Z} so that $$\ov{p}_1+\ov{p}_{-1} = \ov{h}(q_x), $$ where $q_x \in P_y$ is slight abuse of notation for the inverse of the element $q_x \in \ov{\M}_{Y,y}$ smoothing $x$ under the isomorphism $P_y \to \ov{\M}_{Y,y}$.  From the universal property of pushouts we then obtain a map $$s := (\ov{h}, \ov{p}_1, \ov{p}_2) : \ov{\M}_{X,x} \cong P_y \oplus_{\NN} \NN^2 \to \M_X(\u{X}),$$ which we can assume is a chart after shrinking $\u{X}$ if necessary.  Let $\ov{k} : P_y \to \M_{X''}(\ov{X})$ be the composition of $h$, $k^{\dagger}(\u{Y})$, and $\M_{Y''}(\u{Y}) \to \M_{X''}(\u{X})$.  After another shrinking, we can find lifts $t_1, t_{-1} \in \M_{X''}(\u{X})$ of the distinguished elements $p_1,p_{-1} \in \ov{\M}_{X'',x}$ and assume that they satisfy $$t_1+t_{-1} = \ov{k}(q_x),$$ hence we obtain a map $$t := (\ov{k},t_1,t_2) :  \ov{\M}_{X,x} \cong P_y \oplus_{\NN} \NN^2 \to \M_{X''}(\u{X}).$$  If we knew $ a^\dagger(\u{X})t = b^\dagger(\u{X})s$ then we could obtain the desired completion $h^\dagger$ of \eqref{dia3} as the map on associated log structures associated to $t$ using the fact that $s$ is a chart.  Notice that, by construction, we do have the desired equality $ a^\dagger(\u{X})t = b^\dagger(\u{X})s$ on the submonoid $P_y \subseteq \ov{\M}_{X,x}$, so the only issue is that $a^\dagger(\u{X})(t_i)$ may not be equal to $b^\dagger(\u{X})(\ov{p}_i)$, but they do have the same image in the characteristic $\ov{\M}_{X',x}$ (namely the distinguished element $p_i$), and we do know that $$a^\dagger(\u{X})(t_1 + t_2) = b^\dagger(\u{X})(\ov{p}_1+\ov{p}_2) $$ (because this element comes from the submonoid $P_y$), so after shrinking again, we can find a unit $u \in \O_{\u{X}}^*(\u{X})$ and replace $t_1$ with $u t_1$, $t_2$ with $u^{-1} t_2$ to arrange the desired equality.\footnote{We are mixing additive and multiplicative notation for monoids here.}   \end{proof}

\begin{lem} \label{lem:enoughbasics} For any log curve $f :X \to Y$, there is a basic log curve $f' : X' \to Y'$ and a morphism of log curves $$ \xym{ X \ar[r] \ar[d] & X' \ar[d] \\ Y \ar[r]^g & Y' } $$ with $\u{g} = \Id$. \end{lem}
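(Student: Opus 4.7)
The plan is to keep the underlying schemes fixed, setting $\u{Y}' := \u{Y}$ and $\u{X}' := \u{X}$, and modify the log structures so that the characteristic of $Y'$ at each point $y$ becomes $P_y$, with the smoothing map $q : P_y \to \ov{\M}_{Y,y}$ playing the role of the characteristic of $g^\dagger_y$. This makes basicness of $f' : X' \to Y'$ automatic and reduces the problem to constructing $\M_{Y'}$, $\M_{X'}$, and the required morphisms compatibly.

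For $\M_{Y'}$ I would work \'etale-locally around a point $y \in \u{Y}$: after shrinking so that every node $x_1,\dots,x_k$ of $\u{f}^{-1}(y)$ is defined and each $q_{x_i} \in \ov{\M}_{Y,y}$ admits a lift $\tilde q_{x_i} \in \M_Y$, define $\M_{Y'}$ locally to be the log structure associated to the prelog structure $P_y \to \M_Y \xrightarrow{\alpha_Y} \O_Y$ given by $x_i \mapsto \tilde q_{x_i}$, and take $g^\dagger$ from the inclusion of prelog structures. Two choices of lifts differ by units, hence yield canonically isomorphic associated log structures, so the local data glues by \'etale descent to a global log scheme $Y'$ with a morphism $g : Y \to Y'$ satisfying $\u{g} = \Id$, $\ov{g}^\dagger_y = q$, and $\ov{\M}_{Y',y} = P_y$ at every point $y$.

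For $\M_{X'}$ I would use an analogous procedure, guided by Theorem~\ref{thm:logcurves}: at each $x \in \u{X}$ over $y$, that theorem expresses $\ov{\M}_{X,x}$ as $\ov{\M}_{Y,y}$, $\ov{\M}_{Y,y} \oplus \NN$, or $\ov{\M}_{Y,y} \oplus_{\NN} \NN^2$ (the pushout along $q_x$ and $\Delta : \NN \to \NN^2$), so I define $\ov{\M}_{X',x}$ by substituting $P_y$ for $\ov{\M}_{Y,y}$ in these expressions. Using the lifts $\tilde q_{x_i}$ together with the lifts $\ov p$ (resp.\ $\ov p_{\pm 1}$ satisfying $\ov p_1 + \ov p_{-1} = \tilde q_x$) supplied by Theorem~\ref{thm:logcurves}\eqref{N}, \eqref{Z}, assemble \'etale-local charts for $\M_{X'}$, pass to associated log structures, and glue. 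The morphism $f' : X' \to Y'$ and the morphism $X \to X'$ with $\u{X \to X'} = \Id$ then come from the induced maps of prelog structures built into the construction.

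Finally, I would verify: (i) the resulting square is cartesian, which can be checked pointwise on characteristics and reduces to the defining pushout decomposition of $\ov{\M}_{X,x}$; (ii) $f'$ is a log curve---its underlying morphism agrees with $\u{f}$, hence is nodal, reduced, and finitely presented, while log smoothness and integrality follow from the Chart Criterion for Log Smoothness applied to the charts built above; (iii) $f'$ is basic since $\ov{\M}_{Y',y} = P_y$ with identity smoothing map. The main obstacle is the \'etale-local gluing---the canonical-ness of the associated-log-structure constructions under different choices of lifts, which is needed for descent---combined with the need to verify that the explicit charts genuinely satisfy the Chart Criterion; both rely on sharpness of the relevant characteristic monoids and the structural information supplied by Theorem~\ref{thm:logcurves}.
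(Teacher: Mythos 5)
Your construction is the same one the paper uses: fix the underlying schemes, take $\M_{Y'}$ \'etale-locally to be the log structure associated to a free chart $\NN^n \to \M_Y \to \O_Y$ given by lifts of the node-smoothing elements, build $\M_{X'}$ from the corresponding charts $\NN^n \oplus_{\NN}\NN^2$ (resp.\ $\NN^n\oplus\NN$) supplied by Theorem~\ref{thm:logcurves}\eqref{Z},\eqref{N}, check cartesianness on characteristics and the log curve property via the Chart Criterion. However, two steps that you dispose of quickly are exactly where the real work lies. First, basicness of the locally constructed $f'$ is \emph{not} automatic: it must hold at every point $y'$ of the \'etale neighborhood, not just at the chosen center $y$. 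At such a $y'$ the characteristic $\ov{\M}_{Y',y'}$ is the quotient of $P_y$ by those generators $\tilde q_i$ whose images $\alpha_Y(\tilde q_i)$ become units at $y'$, and one must check that the surviving generators are in bijection with the nodes of the fiber over $y'$ and map to the corresponding smoothing elements; this uses the \'etale local form $xy=\alpha_Y(\ov q_i)$ from Theorem~\ref{thm:logcurves}\eqref{Z} and is precisely the computation carried out at the end of the paper's proof.

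Second, your descent argument is insufficient as stated. The observation that two lifts of the same $q_{x_i}$ differ by units only handles the ambiguity of the chart for a \emph{fixed} center $y$ and fixed monoid $P_y$; it does not address how to glue constructions centered at different points $y_1,y_2$, whose free monoids $P_{y_1},P_{y_2}$ are built on different sets of nodes, nor how to glue the pieces $\M_{X'}|U_i$ over $\u{X}$ (for the latter the paper arranges that $f$ is strict outside the $U_i$ and on the overlaps $U_i\cap U_j$, so that all candidate log structures agree with $\u{f}^*\M_{Y'}$ there). For the base the paper has a cleaner device which your outline misses: it proves \emph{first} that basic log curves are minimal (Lemma~\ref{lem:basicimpliesminimal}), so any two locally constructed basic log curves receiving $X\to Y$ over the identity are related by a \emph{unique} isomorphism compatible with those maps, and uniqueness gives the cocycle condition for free; that is why the paper can say ``it suffices to construct this locally.'' If you do not want to invoke minimality, you need an explicit matching of node-smoothers on overlaps (again via the uniqueness statement in Theorem~\ref{thm:logcurves}\eqref{Z}), which amounts to redoing that argument by hand.
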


\begin{proof} It suffices to construct this locally on $\u{Y}$ since the locally constructed basic log structures will glue uniquely on overlaps in light of the previous lemma.  Consider a point $y$ of $Y$.  Let $x_1,\dots,x_n$ be the singular (i.e.\ nodal) points in the geometric fiber of $\u{f}$ over $y$.  Let $x_{n+1},\dots,x_{n+m}$ be the points of the geometric fiber of $\u{f}$ over $y$ where the relative characteristic $\ov{\M}_{X/Y}$ is $\NN$ (one can see that there are only finitely many such points by using Theorem~\ref{thm:logcurves}\eqref{N}).  After possibly replacing $Y$ with a (strict \'etale) neighborhood of $y$, we can assume: \begin{enumerate} \item There are lifts $\ov{q}_i \in \M_Y(Y)$ of the elements $q_i \in \ov{\M}_{Y,y}$ smoothing $x_i$ for $i=1,\dots,n$. \item There are neighborhoods $U_i$ of $x_i$ and lifts $\ov{p}_{i,1}, \ov{p}_{i,-1} \in \M_X(U_i)$ of the distinguished elements $p_{i,1},p_{i,-1} \in \ov{\M}_{X,x_i}$ as in Theorem~\ref{thm:logcurves}\eqref{Z} for $i=1,\dots,n$ and lifts $\ov{p}_i \in \M_X(U_i)$ of the distinguished element $p_i \in \ov{\M}_{X,x_i}$ as in Theorem~\ref{thm:logcurves}\eqref{N} for $i=n+1,\dots,n+m$.  \item $f$ is strict (in particular $\u{f}$ is smooth) outside of the union of the $U_i$ and on the intersection $U_i \cap U_j$ for $i \neq j$. \end{enumerate}  Let $q := (\ov{q}_1, \dots, \ov{q}_n) : \NN^n \to \M_Y(Y)$, and consider the (fine) log structure $\M_{Y'}$ on $\u{Y}$ associated to the prelog structure $\alpha_Y q : \NN^n \to \O_Y(Y)$.  Note $\M_{Y'}$ maps to $\M_Y$ via $q^{\a}$.  For $i=1,\dots,n$, define $\M_{X'}|U_i$ to be the log structure associated to the composition of $$ h_i := ( f^\dagger q, \ov{p}_{i,1} , \ov{p}_{i,-1}) : \NN^n \oplus_{\NN} \NN^2 \to \M_X(U_i) $$ and $\alpha_X : \M_X(U_i) \to \O_X(U_i)$.  Note $\M_{X'}|U_i$ maps to $\M_X|U_i$ via $h_i^{\a}$ and $\u{f}^* \M_{Y'} | U_i$ maps to $\M_{X'}|U_i$ via $(\NN^n \to \NN^n \oplus_{\NN} \NN^2)^{\a}$.   For $i=n+1,\dots,n+m$, define $\M_{X'}|U_i$ to be the log structure associated to the composition of $$ h_i := (f^\dagger q, \ov{p}_i ) : \NN^n \oplus \NN \to \M_X(U_i) $$ and $\alpha_X : \M_X(U_i) \to \O_X(U_i)$.  Note $\M_{X'}|U_i$ maps to $\M_X|U_i$ via $h_i^{\a}$ and $\u{f}^* \M_{Y'} | U_i$ maps to $\M_{X'}|U_i$ via $(\NN^n \to \NN^n \oplus \NN)^{\a}$.  

We have defined maps of log schemes $$f'_i : (\u{U}_i, \M_{X'}|U_i) \to (\u{Y}, \M_{Y'})$$ for $i=1,\dots,m+n$.  Each map $f'_i$ is a log curve by the Chart Criterion and the \'etaleness statements of Theorem~\ref{thm:logcurves}.   Notice that the strict locus of $f'_i$ is the same as the strict locus of $f|U_i$, namely the complement of the simultaneous zero locus of $\alpha_X(\ov{p}_{i,1}), \alpha_X(\ov{p}_{i,-1})$ ($i=1,\dots,n$) or the nonzero locus of $\alpha_X(\ov{p}_{i})$ ($i=n+1,\dots,n+m$).  In particular, we have $\M_{X'}|U_i |U_{ij} = \u{f}^* \M_{Y'} = \M_{X'}|U_j|U_{ij} $ so we can glue the local $\M_{X'}|U_i$ to a global log structure $\M_{X'}$ which agrees with $\u{f}^* \M_{Y'}$ outside the non-strict loci of the $f'_i$.  Thus we obtain a log curve $f' : X' \to Y'$ fitting into a commutative diagram as in the statement of the lemma.  To check that the diagram is cartesian, it is enough to check that $$ \xym{ \ov{\M}_{Y',f(x)} \ar[r] \ar[d] & \ov{\M}_{Y,f(x)} \ar[d] \\ \ov{\M}_{X',x} \ar[r] & \ov{\M}_{X,x} } $$ is cocartesian at any point $x$ of $\u{X}$, which is clear from the description of the right vertical arrow from Theorem~\ref{thm:logcurves} and our explicit charts for the left vertical arrow.  

It remains to check that $f' : X' \to Y'$ is basic.  Consider a point $y'$ of $\u{Y}$.  Note that $\ov{q}_i$ is zero in $\ov{\M}_{Y',y}$ iff $\alpha_Y(\ov{q})_{y'} \in \O_{Y,y'}^*$ iff $\u{f}|\u{f}^{-1}(y') \cap U_i$ is smooth.  On the other hand, if $\alpha_Y(\ov{q})_{y'} \in \m_{y'}$, then the geometric fiber $\u{f}|\u{f}^{-1}(y')$ contains a unique singular point $x_i = x_i(y')$ given by the simultaneous vanishing of $\alpha_X(\ov{p}_{i,1}), \alpha_X(\ov{p}_{i,-1})$ (c.f.\ the \'etale map in Theorem~\ref{thm:logcurves}\eqref{Z}) and we have a pushout diagram:  $$ \xym@C+30pt{ \NN \ar[r]^-{\ov{q}_{i,y'}} \ar[d]_\Delta & \ov{\M}_{Y',y'} \ar[r] \ar[d] & \ov{\M}_{Y,y'} \ar[d] \\ \NN^2 \ar[r]^-{\ov{p}_{i,1,x_i}, \ov{p}_{i,-1,x_i}} & \ov{\M}_{X',x_i} \ar[r] & \ov{\M}_{X,x_i} } $$   In other words, the image of $\ov{q}_i \in \M_{Y}(Y)$ in $\ov{\M}_{Y',y'}$ \emph{is} the element smoothing $x_i$, and our chart description of $\M_{Y'}$ shows that \be \ov{\M}_{Y',y'} & = & \frac{\NN \langle \ov{q}_1, \dots, \ov{q}_n \rangle}{ \langle \ov{q}_i : \ov{q}_{i,y'} \in \O_{Y,y'}^* \rangle } \ee is the free monoid on these ``node smoothers."  \end{proof}

\begin{thm} A log curve is minimal iff it is basic.  The category of log curves satisfies the conditions \eqref{enoughminimals} and \eqref{strict}, hence is equivalent, by the Descent Lemma, to the category $(\X,\M)$ for some groupoid fibration $\X \to \Sch$ with log structure $\M : \X \to \Log$. \end{thm}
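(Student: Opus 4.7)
The plan is to combine the three preceding lemmas to obtain both the equivalence ``minimal iff basic" and the conditions \eqref{enoughminimals}, \eqref{strict} of the Descent Lemma, after which the conclusion follows formally. Once it is established that minimal and basic name the same class of log curves, both Descent Lemma conditions are mere translations of Lemma~\ref{lem:enoughbasics} and Lemma~\ref{lem:strictminimal} respectively.

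The direction ``basic implies minimal" is Lemma~\ref{lem:basicimpliesminimal} itself. For the converse, suppose $f : X \to Y$ is a minimal log curve. Lemma~\ref{lem:enoughbasics} produces a basic log curve $f' : X' \to Y'$ together with a $\Z$-morphism $i : f \to f'$ satisfying $\u{F}i = \Id_{\u{Y}}$, and by Lemma~\ref{lem:basicimpliesminimal} the target $f'$ is also minimal. Hence $i$ is a morphism in the fiber $\Z_{\u{Y}}$ between two minimal objects, so the last clause of Proposition~\ref{prop:minimalityproperties} forces $i$ to be an isomorphism. Basicness is manifestly invariant under isomorphism (it is a pointwise condition on the characteristic monoid of the base together with the nodes and smoothing elements in the geometric fibers, all of which transport faithfully along an isomorphism of log curves), so $f$ itself is basic.

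With ``basic $=$ minimal" in hand, condition \eqref{enoughminimals} is exactly Lemma~\ref{lem:enoughbasics}. For condition \eqref{strict}, recall from the Introduction that a morphism in $\LogSch$ is cartesian over $\Sch$ precisely when it is strict; hence for a $\Z$-morphism $i : w \to z$ with $z$ minimal, the assertion ``$Fi$ cartesian iff $w$ minimal" is the assertion ``the underlying base morphism is strict iff the source log curve is basic", which is exactly Lemma~\ref{lem:strictminimal}. Invoking the Descent Lemma then yields the promised equivalence $\Z \simeq (\Z^m, \M)$ in $\CFG / \LogSch$, where $\Z^m$ is the full subcategory of basic log curves and $\M : \Z^m \to \Log$ is the restriction of $F$. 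No step here is genuinely difficult; all the substantive work (the combinatorial structure theorem for log curves and the étale-local gluing of basic chart data) is already packaged in the three preceding lemmas, and that gluing was the one place where any real obstacle had to be overcome.
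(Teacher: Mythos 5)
Your proposal is correct and follows essentially the same route as the paper: Lemma~\ref{lem:basicimpliesminimal} for ``basic implies minimal," then Lemma~\ref{lem:enoughbasics} together with the last clause of Proposition~\ref{prop:minimalityproperties} to show a minimal curve is isomorphic (over the identity) to a basic one, and finally Lemma~\ref{lem:enoughbasics} and Lemma~\ref{lem:strictminimal} translating directly into conditions \eqref{enoughminimals} and \eqref{strict} (using cartesian $=$ strict in $\LogSch$), so the Descent Lemma applies. Your version merely spells out the details the paper leaves implicit, such as the isomorphism-invariance of basicness.
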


\begin{proof} We saw that basic log curves are minimal in Lemma~\ref{lem:basicimpliesminimal}.  It then follows from Lemma~\ref{lem:enoughbasics} and the last part of Proposition~\ref{prop:minimalityproperties} that minimal log curves are the same as basic log curves.  The last statement then follows from Lemma~\ref{lem:enoughbasics} and Lemma~\ref{lem:strictminimal}. \end{proof}

\subsection{Log points} \label{section:logpoints}  In this section we work in the category $\LogSch$ of all integral log schemes.  We do not care whether our log structures are quasi-coherent.  Let $P$ be a sharp (unit-free), integral monoid.  Let $G := \Spec \ZZ[P^{\rm gp}]$, regarded as a log scheme with trivial log structure, so \be G(Y) & := & \Hom_{\LogSch}(Y,G) \\ & = & \Hom_{\Sch}(\u{Y}, G) \\ & = & \Hom_{\Ab}(P^{\rm gp},\Gamma(Y,\O_Y^*)) \\  & = & \Hom_{\Mon}(P, \Gamma(Y,\O_Y^*)). \ee  (We always use the last description of $G(Y)$.)  Then $G$ acts on $\Spec (0: P \to \ZZ)$ in a natural manner inducing the trivial action on the characteristic monoid $P$.  For a log scheme $Y$, set \be Y_P & := & Y \times_{\Spec \ZZ} \Spec (0: P \to \ZZ).\ee  The log structure on $Y_P$ is given explicitly as \be \M_{Y_P} = \M_Y \oplus \u{P}, \ee where $\u{P}$ is the sheaf of locally constant functions to $P$ as usual.  The structure map is given by \be \alpha_Y \oplus 0 : \M_Y \oplus \u{P} & \to & \O_Y \\ (m,p) & \mapsto & \left \{ \begin{array}{lll} \alpha_Y(m), & & p=0 \\ 0, & & p \neq 0, \end{array} \right . \ee where $\alpha_Y : \M_Y \to \O_Y$ is the structure map for $Y$.  The action of $u \in G(Y)$ on $Y_P$ is given explicitly by \bne{action} \M_Y \oplus \u{P} & \to & \M_Y \oplus \u{P} \\ \nonumber (m,p) & \mapsto & (u(p)m, p) ,\ene where we view $u$ as a monoid homomorphism $u : P \to \Gamma(Y,\O_Y^*)$ and we view $\O_Y^*$ as a submonoid of $\M_X$ in writing $u(p)m$.  This is an action over $Y$ (under $\M_Y$) because $$(m,0) \mapsto (u(0)m,p)=(m,0).$$  This action clearly induces the trivial action on the relative characteristic monoid $\ov{\M}_{Y_P/Y} = \u{P}$ and is a faithful action (consider its behavior on $\O_Y^* \oplus \u{P} \subseteq \M_Y \oplus \u{P}$).  

\begin{rem} $G$ is the full automorphism group (scheme) of automorphisms of $\M_Y \oplus \u{P}$ under $\M_Y$ inducing the trivial action on the characteristic $\ov{\M}_Y \oplus \u{P}$. \end{rem}

The direct sum $\M_Y \oplus \u{P}$ is the categorical direct sum of $\M_Y$ and $\O_Y^* \oplus \u{P}$ in the category of log structures, but it is also the categorical \emph{product} in sheaves of monoids.  It is not, however, the product in the category of log structures; instead, for a log structure $\N_Y$ on $\O_Y$, a map $$f^\dagger = (a,h) : \N_Y \to \M_Y \oplus \u{P}$$ of sheaves of monoids is a map of log structures iff the following conditions are satisfied: \begin{enumerate} \item $\beta(n)=0$ whenever $h(n)$ is nonzero. \item $a | h^{-1}(0) : h^{-1}(0) \to \M_Y $ is a map of log structures. \end{enumerate}

\noindent {\bf Definitions.}  A $P$ \emph{log point} $\pi : Y' \to Y$ is a morphism of log schemes with $\u{\pi}=\Id$ such that there is a (strict) \'etale cover $U \to Y$ and an isomorphism $\phi : U_P \to Y' \times_Y U$ (over $U$) such that the ``transition function" $ (\pi_2^* \phi)^{-1} \pi_1^* \phi  \in \Aut( (U \times_Y U)_P / U \times_Y U )$ is in $G(U \times_Y U)$.  A \emph{morphism of log points} $$(h',h) :(Z' \to Z) \to (Y' \to Y)$$ is a cartesian diagram $$ \xym{ Z' \ar[r]^{h'} \ar[d] & Y' \ar[d] \\ Z \ar[r]^h & Y } $$ of log schemes such that $\u{h}=\u{h}'$ and, \'etale locally on $\u{Y}$, we can choose a trivialization $Y' \cong Y_P$ (inducing a trivialization $Z' \cong Z_P$) such that $h'$, viewed as a map $h' : Z_P \to Y_P$ via the trivializations, differs from the natural map $h_P : Z_P \to Y_P$ by the action of some $u \in G(Z)$. For a log scheme $X$, a $P$ \emph{log point in} $X$ is a log point $Y' \to Y$ together with a morphism $f : Y' \to X$.  A \emph{morphism} of $P$ log points in $X$ is a morphism of the underlying $P$ log points commuting with the maps to $X$.  The category of $P$ log points in $X$ is fibered in groupoids over $\LogSch$ via $$(Y' \to Y, f : Y' \to X) \mapsto Y.$$

The category of $P$ log points is equivalent to $B G$ essentially by definition, though we do not need this in what follows.

Since $G$ acts trivially on characteristics, there is a natural splitting $\ov{\M}_{Y'} = \ov{\M}_Y \oplus \u{P}$ for any log $P$ point $Y' \to Y$, though one can generally only find a splitting $\M_{Y'} \cong \M_Y \oplus \u{P}$ \'etale locally on $\u{Y}$. Given a log point $(Y' \to Y, f)$ in $X$, we now construct the following: \begin{enumerate} \item \label{NY} a log structure $\N_Y$ on $\u{Y}$, called the \emph{basic log structure}, and a map $z^\dagger : \N_{Y} \to \M_{Y}$ of log structures on $Y$. \item \label{NY'} a log structure $\N_{Y'}$ on $\u{Y}$ and a morphism $\pi^\dagger : \N_Y \to \N_{Y'}$ of log structures on $\u{Y}$ making $(\Id_{\u{Y}}, \pi^\dagger) : Z' \to Z$ a $P$ log point, where we set $Z := (\u{Y}, \N_Y)$, $Z' := (\u{Y}, \N_{Y'})$.  \item \label{logpointmap} A morphism $(z')^\dagger : \N_{Y'} \to \M_{Y'}$ making the square $$ \xym{ \N_Y \ar[r]^{z^\dagger} \ar[d]_{\pi^\dagger} & \M_Y \ar[d]^{\pi^\dagger} \\ \N_{Y'} \ar[r]^{(z')^\dagger} & \M_{Y'} } $$ cocartesian and making $$((\Id_{\u{Y}}, (z')^\dagger), (\Id_{\u{Y}}, z^\dagger)) : (Y' \to Y) \to (Z' \to Z)$$ a morphism of log points.  \item \label{g} A morphism $g^\dagger : f^* \M_X \to \N_{Y'}$ satisfying $f^\dagger = (z')^\dagger g^\dagger$, hence lifting the map of log points in \eqref{logpointmap} to a morphism of log points in $X$. \end{enumerate}

For \eqref{NY}, we \emph{choose}, locally on $\u{Y}$, a splitting $\M_{Y'} \cong \M_{Y} \oplus \u{P}$, and use it to decompose $f^\dagger$ as $$f^\dagger = (a,h) : f^{-1}\M_{X} \to \M_{Y} \oplus \u{P}. $$  The map $a : f^{-1} \M_X \to \M_Y$ is not necessarily a map of prelog structures when $f^{-1} \M_X$ is regarded as a prelog structure with the usual structure map $$ f^\sharp f^{-1} \alpha_X : f^{-1} \M_X \to \O_Y.$$  Never-the-less, it certainly is a map of prelog structures when $f^{-1} \M_X$ is regarded as a prelog structure via the ``unusual" map $\alpha_Y a : f^{-1} \M_X \to \O_Y$.  We let \be \N_Y & := & (f^{-1} \M_X, \alpha_Y a)^{\a} \ee be the log structure associated to this ``unusual" prelog structure, and we let $z^\dagger$ be the map $a^{\a} : \N_Y \to \M_Y$ of log structures induced by the map of prelog structures $a$.  If we choose a different local splitting $\M_{Y'} \cong \M_Y \oplus \u{P}$, then we obtain a different decomposition $f^\dagger = (a', h)$ related to the first decomposition by a commutative diagram $$ \xym{ f^{-1} \M_X \ar[r]^{(a,h)} \ar[rd]_{(a',h)} & \M_Y \oplus \u{P} \ar[d]^{(m,p) \mapsto (u(p)m,p)}_{\cong} \\ & \M_Y \oplus \u{P} }$$ where the vertical arrow is the action \eqref{action} of some $u \in G(Y)$; i.e.\ $a'(m)=u(h(m))a(m)$ for all $m \in f^{-1} \M_X$.  The log structures \be (f^{-1} \M_X, \alpha_Y a)^{\a} & = & f^{-1} \M_X \oplus_{ a^{-1} \O_Y^*} \O_Y^* \quad \quad {\rm and} \\ (f^{-1} \M_X, \alpha_Y a' )^{\a} & = & f^{-1} \M_X \oplus_{ (a')^{-1} \O_Y^*} \O_Y^* \ee are then naturally identified by the isomorphism \bne{gluingiso} (f^{-1} \M_X, \alpha_Y a)^{\a} & \to & (f^{-1} \M_X, \alpha_Y a' )^{\a} \\ \nonumber [m,v] & \mapsto & [m, u^{-1}(h(m)) v] . \ene  The map \eqref{gluingiso} is well-defined because $$ [m,u^{-1}(h(m))] = [0,a'(m)u^{-1}(h(m))] = [0,a(m)] $$ in $(f^{-1} \M_X, \alpha_Y a' )^{\a}$ (and its inverse is well-defined for similar reasons).  Note that the isomorphism \eqref{gluingiso} is a natural isomorphism of log structures over $\M_Y$ (it respects the maps $a^a, (a')^a$), so we can glue the locally defined $\N_Y$ and $z^\dagger : \N_Y \to \M_Y$ into a global log structure $\N_Y$ and map of log structures $z^\dagger : \N_Y \to \M_Y$.  

For \eqref{NY'} and \eqref{logpointmap}, we just define $\N_{Y'}$ locally in the presence of a splitting $\M_{Y'} \cong \M_Y \oplus \u{P}$ to be $\N_{Y'} := \N_Y \oplus \u{P}$, then we glue the locally defined $\N_{Y'}$ using the same transition function that was used to glue $\M_Y \oplus \u{P}$, so that the maps $a \oplus \Id : \N_Y \oplus \u{P} \to \M_Y \oplus \u{P}$, defined with the aid of a local splitting $\M_{Y'} \cong \M_Y \oplus \u{P}$ and corresponding decomposition $f^\dagger = (a,h)$, glue to form maps $\N_{Y'} \to \N_Y$ (these then clearly make the diagram in \eqref{logpointmap} a pushout since this is a local issue).  More precisely, if two trivializations of $\M_{Y'}$ are related by the action \eqref{action} for some $u \in G(Y)$, then the corresponding trivializations of $\N_{Y'}$ are related by the isomorphism \bne{gluingiso'} (f^{-1}\M_X, \alpha_Y a)^{\a} \oplus \u{P} & \to & (f^{-1}\M_X, \alpha_Y a')^{\a} \oplus \u{P} \\ \nonumber [m,v,p] & \mapsto & [m, u^{-1}(h(m)) u(p)v,p]. \ene  We define the map $g^\dagger : f^* \M_X \to \N_{Y'}$ locally in the presense of a trivialization $\M_{Y'} \cong \M_Y \oplus \u{P}$ and corresponding decomposition $f^\dagger = (a,h)$ to be given by \bne{localg} g^\dagger : f^* \M_X & \to & (f^{-1} \M_X, \alpha_Y a)^{\a} \oplus \u{P} \\ \nonumber [m,v] & \mapsto & [m,v,h(m)], \ene noting that $f^* \M_X = (f^{-1}\M_X, f^\sharp f^{-1} \alpha_X)^{\a}$.  The fact that the local definitions of $\N_{Y'} \to \M_{Y'}$ and $g^\dagger$ glue is summed up in the following commutative diagram:  $$ \xym@C+25pt@R-10pt{ & (f^{-1} \M_X, \alpha_Y a')^{\a} \oplus \u{P} \ar[r]^-{a' \oplus \Id} & \M_Y \oplus \u{P} \\ f^* \M_X \ar[ru]_{\eqref{localg}} \ar[rd]^{\eqref{localg}} \\ & (f^{-1} \M_X, \alpha_Y a)^{\a} \oplus \u{P} \ar[uu]^{\eqref{gluingiso'}}_{\cong} \ar[r]^-{a \oplus \Id} & \M_Y \oplus \u{P} \ar[uu]^{\eqref{action}}_{\cong} } $$

\begin{rem} It is clear from the local construction of the basic log structure $\N_Y$ that $\N_Y$ will be coherent (resp.\ fine, \dots) whenever $\M_X$ is coherent (resp.\ fine, \dots). \end{rem}

\noindent {\bf Definition.} A $P$ log point $(Y' \to Y,f)$ in $X$ is called \emph{basic} iff the map $\ov{z}^\dagger$ of \eqref{NY} is an isomorphism.

It is important to understand the difference between the log structure $f^* \M_X$ and the basic log structure $\N_Y$.  Both are constructed, at least locally, as log structures associated to prelog structures where the domain monoid is $f^{-1} \M_X$.  But the structure maps to $\O_Y$ are radically different.  For example, let us compare their characteristic monoids.  Notice that the submonoid $a^{-1} \O_Y^* \subseteq f^{-1} \M_X$ defined with the aid of a local splitting $\M_{Y'} \cong \M_Y \oplus \u{P}$ and corresponding decomposition $f^\dagger = (a,h)$ does not actually depend on the choice of local splitting, as is clear from the form of the action \eqref{action} relating two different choices of splitting.  Since $\N_Y = (f^{-1} \M_X, \alpha_Y a)^{\a}$ and formation of characteristic monoids commutes with $\a$, the quotient $f^{-1} \M_X / a^{-1} \O_Y^*$ is nothing but the characteristic monoid $\ov{\N}_Y$ of $\N_Y$.  On the other hand, the characteristic monoid of $f^* \M_X$ is the same as the characteristic monoid of the prelog structure $$(f^{-1} \M_X, f^\sharp f^{-1} \alpha_X) = (f^{-1} \M_X, \alpha_{Y'} f^\dagger).$$  Given a local decomposition $f^\dagger = (a,h)$, note that $m \in f^{-1} \M_X$  will map to a unit in $\O_Y$ iff $a(m) \in \O_Y^* \subseteq \M_Y$ \emph{and} $h(m) = 0$.  That is, $f^{-1} \ov{\M}_X$ is the quotient of $f^{-1} \M_X$ by the submonoid $\{ m \in a^{-1} \O_Y^* : h(m)=0 \}$, so the natural map $\pi_1 \ov{g}^\dagger : f^{-1} \ov{\M}_X \to \ov{\N}_X$ is surjective.   (Here $\pi_1$ is the natural projection from $\ov{\N}_{Y'} = \ov{\N}_Y \oplus \u{P}$ to $\ov{\N}_Y$.)  In particular, if $(Y' \to Y,f)$ is basic, then $$\pi_1 \ov{f}^\dagger = \ov{z}^\dagger \pi_1 \ov{g}^\dagger : f^{-1} \ov{\M}_X \to \ov{\M}_Y$$ must be surjective, but the converse does not hold in general.  

\begin{rem} In some simple situations, surjectivity of $\pi_1 \ov{f}^\dagger$ does imply that $(Y' \to Y,f)$ is basic.  For example, suppose the log structure $\M_X$ has $\ov{\M}_{X,x}$ isomorphic to $0$ or $\NN$ for every point $x$ of $X$, and suppose $\pi_1 \ov{f}^\dagger$ is surjective.  Then I claim $(Y' \to Y,f)$ is basic.  Our log structures are integral, so it is enough to show that $\ov{z}^\dagger_y : \ov{\N}_{Y,y} \to \ov{\M}_{Y,y}$ is an isomorphism.  Since $\pi_1 \ov{g}^\dagger_y : \ov{\M}_{X,f(y)} \to \ov{\N}_{Y,y}$ is surjective, $\ov{\N}_{Y,y}$ must be isomorphic to $0$ or $\NN$, and $\ov{z}^\dagger_y$ must be surjective (because of the factorization $\pi_1 \ov{f}^\dagger = \ov{z}^\dagger \pi_1 \ov{g}^\dagger$).  But $\ov{z}^\dagger_y$, like any map on stalks of characteristics of log structures, is a map between sharp monoids with trivial kernel, so when its domain is $0$ or $\NN$ it is surjective iff it is an isomorphism.  This does not hold for maps out of $\NN^2$, and indeed, $f^\dagger =(a,h) := ((1,1),(0,0)) : \NN^2 \to \NN \oplus \NN$ defines an $\NN$ log point $(Y' \to Y,f)$ in $X = \Spec (0 : \NN^2 \to k)$ with $Y = \Spec ( 0 : \NN \to k)$ where $\pi_1 \ov{g}^\dagger$ is $\Id : \NN^2 \to \NN^2$ and $\ov{z}^\dagger = \pi_1 \ov{f}^\dagger$ is $(1,1) : \NN^2 \to \NN$. \end{rem}

\begin{lem} Every $P$ log point $(Z' \to Z, f)$ in $X$ admits a morphism $(b',b)$ to a basic $P$ log point in $X$ with $\u{b}=\Id$.  Given a morphism $(b',b) :(Z' \to Z,b'f) \to (Y' \to Y,f)$ of $P$ log points in $X$ with $(Y' \to Y, f)$ basic, $(Z' \to Z, fh')$ is basic iff $b$ is strict. \end{lem}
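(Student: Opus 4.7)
My plan is to extract both parts of the lemma from the construction \eqref{NY}--\eqref{g} given in the paragraphs above: the first part is essentially already contained in the construction, and the second follows from a functoriality property of the basic log structure whose proof is a variant of the cocycle analysis in \eqref{NY}.

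For the first assertion, I would apply the construction \eqref{NY}--\eqref{g} to $(Z' \to Z, f)$ (so relabelling the $Y,Y'$ of the construction as $Z,Z'$); write $W := (\u{Z}, \N_Z)$, $W' := (\u{Z}, \N_{Z'})$, and let $g : W' \to X$ be the map of \eqref{g}. Items \eqref{NY}--\eqref{logpointmap} then directly provide a morphism of $P$ log points $(b', b) := ((\Id_{\u{Z}}, (z')^\dagger), (\Id_{\u{Z}}, z^\dagger))$ from $(Z' \to Z, f)$ to $(W' \to W, g)$ with $\u{b} = \Id$, and the compatibility over $X$ is the relation $f^\dagger = (z')^\dagger g^\dagger$ asserted in \eqref{g}. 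To see that $(W' \to W, g)$ is itself basic, I would run the construction \eqref{NY} again on $(W' \to W, g)$, using the canonical splitting $\N_{Z'} = \N_Z \oplus \u{P}$ of \eqref{NY'}: by the formula \eqref{localg}, $g^\dagger$ decomposes as $(m \mapsto [m,1],\,h)$, so the ``unusual'' prelog structure on $f^{-1}\M_X$ entering the construction coincides with the one defining $\N_Z$. Hence the basic log structure of $(W' \to W, g)$ is $\N_Z$ itself with structure map the identity---trivially an isomorphism.

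For the second assertion, the key is to produce, from any morphism of $P$ log points $(b', b) : (Z' \to Z, b'f) \to (Y' \to Y, f)$ in $X$ with $\u{b} = \Id$, a canonical isomorphism $\beta : \N_Y \to \N_Z$ of basic log structures on $\u{Y} = \u{Z}$ fitting into a commutative square
\[ \xymatrix{ \N_Y \ar[r]^{z_Y^\dagger} \ar[d]_{\beta} & \M_Y \ar[d]^{b^\dagger} \\ \N_Z \ar[r]^{z_Z^\dagger} & \M_Z. } \]
With such a $\beta$ in hand, the claimed equivalence follows by a two-out-of-three chase: basicity of the target means $z_Y^\dagger$ is an isomorphism and $\beta$ is an isomorphism by construction, so $z_Z^\dagger$ is an isomorphism (i.e., source basic) if and only if $b^\dagger$ is an isomorphism (i.e., $b$ strict).

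The construction of $\beta$ is local and parallels \eqref{gluingiso}. Locally on $\u{Y}$, I would choose a splitting $\M_{Y'} \cong \M_Y \oplus \u{P}$ with respect to which $f^\dagger$ decomposes as $(a, h)$; the defining local form of a morphism of log points then furnishes a splitting of $\M_{Z'}$ with respect to which $(b'f)^\dagger$ decomposes as $(u(h)\cdot b^\dagger a,\,h)$ for some $u \in G(Z)$ relating $b'$ to $b_P$. Using $\u{b} = \Id$, so that $\alpha_Z \circ b^\dagger = \alpha_Y$ as maps $\M_Y \to \O_Y = \O_Z$, the two prelog structures on $f^{-1}\M_X$ defining $\N_Y$ and $\N_Z$ differ by the unit-valued factor $u(h(\cdot))$, so their associated log structures are canonically identified by a formula $[m, v] \mapsto [m, u(h(m))^{-1} v]$; commutativity of the square then falls out of the same computation. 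The hardest part will be checking that these locally defined isomorphisms glue to a well-defined global $\beta$, independent of the choice of local splittings. I expect this to be a routine but tedious cocycle computation of the same shape as the one in \eqref{gluingiso}--\eqref{gluingiso'} used to glue $\N_Y$ and $\N_{Y'}$: changing a splitting of $\M_{Y'}$ by the action of some $v \in G(Y)$ simultaneously twists the local form of $\beta$ and the transition isomorphisms for $\N_Y$ and $\N_Z$, and the discrepancies are designed to cancel.
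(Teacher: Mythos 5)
Your first paragraph is essentially the paper's own argument: the morphism to a basic $P$ log point is exactly the one already produced by the construction \eqref{NY}--\eqref{g}, and your explicit check via \eqref{localg} that the constructed log point is itself basic (rerunning the construction with the canonical splitting $\N_{Z'}=\N_Z\oplus\u{P}$, so that the ``unusual'' prelog structure reproduces $\N_Z$ with identity structure map) correctly makes precise what the paper leaves implicit.

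The second half has a genuine gap: you prove the equivalence only under the extra hypothesis $\u{b}=\Id$, which is not in the statement and cannot be assumed. A morphism of $P$ log points is a cartesian square whose common underlying scheme map $\u{b}=\u{b}'$ is arbitrary, and this generality is exactly what is needed when the lemma is fed into condition \eqref{strict} of the Descent Lemma, where the morphism to a minimal object lies over an arbitrary morphism of log schemes (cartesian $=$ strict, but with arbitrary underlying map). When $\u{b}\neq\Id$ your square does not even make sense: $\N_Y$ and $\N_Z$ are log structures on different schemes, strictness of $b$ means that $b^\dagger\colon b^*\M_Y\to\M_Z$ (not $\M_Y\to\M_Z$) is an isomorphism, and the identity $\alpha_Z b^\dagger=\alpha_Y$ you invoke must be replaced by $\alpha_Z b^\dagger=b^\sharp b^{-1}\alpha_Y$. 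What has to be proved instead is that the basic log structure commutes with base change: since the square is cartesian, a local splitting $\M_{Y'}\cong\M_Y\oplus\u{P}$ with $f^\dagger=(a,h)$ induces one of $\M_{Z'}$ with $(b')^\dagger=b^\dagger\oplus\Id$, so the structure map of the pulled-back log point in $X$ decomposes as $(b^\dagger b^{-1}a,\,b^{-1}h)$, whence $\N_Z=(b^{-1}f^{-1}\M_X,\alpha_Z b^\dagger b^{-1}a)^{\a}=(b^{-1}f^{-1}\M_X,b^\sharp b^{-1}(\alpha_Y a))^{\a}=b^*\N_Y$, compatibly with the maps to $\M_Z$ and $b^*\M_Y$. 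With the resulting isomorphism $b^*\N_Y\to\N_Z$ in place of your $\beta$, your two-out-of-three argument goes through (using that $b^*$ preserves the isomorphism $z^\dagger_Y$); as written, it covers only the over-the-identity special case, where, incidentally, the unit factor $u(h(\cdot))$ you carry along could simply have been absorbed into the choice of splitting of $\M_{Z'}$.
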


\begin{proof} We proved the first statement as part of the main construction of this section.  For the second statement, the key point is just to check that the construction of the basic log structure $\N_Y$ commutes with base change, which should be fairly obvious: Since the morphism of log points is a cartesian diagram, a local splitting $\M_{Y'} \cong \M_Y \oplus \u{P}$  and decomposition $f^\dagger = (a,h)$ induces a local splitting $\M_{Z'} \cong \M_Z \oplus \u{P}$ so that $(b')^\dagger = b^\dagger \oplus \Id$, so we get a decomposition $(b')^\dagger b^{-1} f^\dagger = (b^\dagger b^{-1} a, b^{-1} h)$.  The basic log structure $\N_Z$ would then by given by $( b^{-1} f^{-1} \M_X, \alpha_Z b^\dagger b^{-1} a)^{\a}$.  But $\alpha_Z b^\dagger = b^\sharp b^{-1} \alpha_Y$, and $( b^{-1} f^{-1} \M_X , b^{\sharp} b^{-1} \alpha_Y a)^{\a}$ is just $b^*( f^{-1} \M_X, \alpha_Y a)^{\a}$, which is just $b^* \N_Y$.   Then we have a commutative diagram $$ \xym{ \N_Z \ar[d] & \ar[l]_{\cong} b^* \N_Y \ar[d] \\ \M_Z & \ar[l]_{b^\dagger} b^* \M_Y } $$ where the top horizontal arrow is the isomorphism just discussed, and the right vertical arrow is an isomorphism since $(Y' \to Y, f)$ is basic.  Evidently then, $b^\dagger$ is an isomorphism (i.e.\ $b$ is strict) iff $\N_Z \to \M_Z$ is an isomorphism (i.e.\  $(Z' \to Z,b'f)$ is basic). \end{proof}  

\begin{lem}  A basic $P$ log point in $X$ is minimal. \end{lem}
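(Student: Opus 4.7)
The plan is to translate the minimality property into an explicit local problem, extract uniqueness from unit-level data, and then appeal to the universal property of the basic log structure for existence.

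Write $z=(Y'\to Y,f)$, $w=(Z'\to Z,g)$, $w'=(W'\to W,e)$, all with common underlying scheme $\u{Y}$ (since $P$ log points have $\u{\pi}=\Id$ and $\u{F}i=\u{F}j=\Id$). I would pass \'etale-locally to trivializations $\M_{Y'}\cong\M_Y\oplus\u{P}$, $\M_{Z'}\cong\M_Z\oplus\u{P}$, $\M_{W'}\cong\M_W\oplus\u{P}$, and decompose $f^\dagger=(a,h)$, $g^\dagger=(a',h)$, $e^\dagger=(a'',h)$; the $h$-components all coincide because $G$ acts trivially on characteristics and all three bases are $\u{Y}$. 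By the definition of a morphism of log points, there are $u,v\in G(\u{Y})$ with $(j')^\dagger(m,p)=(u(p)\,j^\dagger m,\,p)$ and $(i')^\dagger(m,p)=(v(p)\,i^\dagger m,\,p)$. The conditions that $j$ and $i$ be morphisms of log points in $X$, i.e.\ $f\circ j'=e=g\circ i'$, yield the master identity
\[ u(h(x))\,j^\dagger a(x)\;=\;v(h(x))\,i^\dagger a'(x)\;=\;a''(x)\qquad(x\in f^{-1}\M_X). \]

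A candidate $k:w\to z$ is locally encoded by a log-structure map $k^\dagger:\M_Y\to\M_Z$ and an element $w_0\in G(\u{Y})$ with $(k')^\dagger(m,p)=(w_0(p)\,k^\dagger m,\,p)$. Since $i^\sharp$ and $j^\sharp$ act as the identity on $\O^*_{\u{Y}}$, the equalities $i^\dagger k^\dagger=j^\dagger$ and $(i')^\dagger(k')^\dagger=(j')^\dagger$ combine to force $w_0=v^{-1}u$, while $g=fk'$ forces $k^\dagger\circ a=c$, where $c(x):=u(h(x))^{-1}v(h(x))\,a'(x)$. Both data are thus pinned down, yielding uniqueness. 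For existence, I would use that $z$ is basic: $\M_Y=\N_Y=(f^{-1}\M_X,\alpha_Y a)^\a$. By the universal property of the associated log structure, extending $c$ to a log-structure map $k^\dagger:\M_Y\to\M_Z$ reduces to checking $\alpha_Z\circ c=\alpha_Y\circ a$. Applying $\alpha_W$ to the master identity and using $\alpha_W j^\dagger=\alpha_Y$, $\alpha_W i^\dagger=\alpha_Z$ gives $u(h)\alpha_Y a=v(h)\alpha_Z a'$, whence $\alpha_Z c(x)=u(h(x))^{-1}v(h(x))\,\alpha_Z a'(x)=\alpha_Y a(x)$. For the compatibility $i^\dagger k^\dagger=j^\dagger$, both sides agree on units and satisfy $i^\dagger k^\dagger\circ a=i^\dagger\circ c=j^\dagger\circ a$ by the master identity, so they agree on the generating set $a(f^{-1}\M_X)\cup\O^*_{\u{Y}}$ of $\N_Y=\M_Y$. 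Globalization is then automatic from uniqueness of the local $(k^\dagger,w_0)$, and the resulting pair constitutes a morphism of $P$ log points in $X$ by construction.

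The main obstacle I anticipate is the prelog compatibility $\alpha_Z c=\alpha_Y a$: a priori $\alpha_Z a'$ and $\alpha_Y a$ have no reason to match on the locus $\{h\neq 0\}$, but the twisting factor $u(h)^{-1}v(h)$ forced by the $\u{P}$-compatibility with $i,j$ is precisely calibrated to correct the discrepancy. This is the conceptual payoff of basicness: the universal property of $\N_Y$ absorbs exactly the data encoded by a morphism of log points.
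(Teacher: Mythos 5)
Your proposal is correct and follows essentially the same route as the paper: reduce to a local statement about log structures on the common underlying scheme, encode the log-point morphisms by $G$-twists, and use basicness (so $\M_Y=\N_Y=(f^{-1}\M_X,\alpha_Y a)^{\a}$) together with the universal property of the associated log structure to produce the completion from the explicitly twisted prelog map, with uniqueness pinned down by the twist and the values on $a(f^{-1}\M_X)$ and units. The only cosmetic difference is that the paper runs the uniqueness argument first on characteristics and then uses invertibility on units, whereas you argue directly via generation of $\N_Y$ by the image of $f^{-1}\M_X$ and units; these are equivalent.
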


\begin{proof}  We must uniquely complete any diagram $$ \xym@R-10pt{ & X \\ W' \ar@{.>}[rr]^j \ar[ru]^{f_1} \ar[dd] & & Z' \ar[lu]_{f_2} \ar[dd] \\ & Y' \ar[lu] \ar[ru] \ar[dd] \\ W \ar@{.>}[rr]^<<<<<<<i & & Z \\ & Y \ar[lu]^{i_1} \ar[ru]_{i_2} } $$ of $P$ log points where $\u{i}_1=\u{i}_2=\Id$ and $(Z' \to Z, Z' \to X)$ is basic.   Note that all underlying scheme maps in this diagram are $\Id$, except possibly $\u{f}_1 = \u{f}_2$, but there is no harm in replacing $X$ with the common underlying scheme and the pullback log structure, so we can assume there is only one scheme $\u{X}$ involved here and that all scheme maps are $\Id$.  We must then uniquely complete the diagram \bne{logpointdiagram} & \xym@R-10pt{ & \M_X \ar[ld]_{f_1^\dagger} \ar[rd]^{f_2^\dagger} \\ \M_{W'}  \ar[rd]  & & \ar@{.>}[ll]_{j^\dagger} \ar[ld] \M_{Z'}   \\ & \M_{Y'}   \\ \M_{W} \ar[uu] \ar[rd]_{i_1^\dagger} & & \ar@{.>}[ll]_<<<<<<<{i^\dagger} \ar[ld]^{i_2^\dagger} \M_{Z} \ar[uu] \\ & \M_{Y} \ar[uu]  } \ene of log structures on $\u{X}$ \emph{with maps of log structures that are actually maps of log points}.  It suffices to show that there is a unique completion locally. By definition of $P$ log points, we can assume, after shrinking, that \eqref{logpointdiagram} takes the form  \bne{logpointdiagram2} & \xym@R-10pt{ & \M_X \ar[ld]_{(a_1,h)} \ar[rd]^{(a_2,h)} \\ \M_{W} \oplus \u{P}  \ar[rd]_{j_1^\dagger}  & & \ar@{.>}[ll]_{j^\dagger} \ar[ld]^{j_2^\dagger} \M_{Z} \oplus \u{P}   \\ & \M_{Y} \oplus \u{P}   \\ \M_{W} \ar[uu] \ar[rd]_{i_1^\dagger} & & \ar@{.>}[ll]_<<<<<{i^\dagger} \ar[ld]^{i_2^\dagger} \M_{Z} \ar[uu] \\ & \M_{Y} \ar[uu] } \ene where $j_1^\dagger$, $j_2^\dagger$ are given by \be (m,p) & \mapsto & (u_1(p) i_1^\dagger(m), p) \\ (m,p) & \mapsto & (u_2(p) i_2^\dagger(m), p) \ee respectively, for some $ u_1, u_2 : P^{\rm gp} \to \Gamma(\u{X}, \O_{\u{X}}^*) $.  By definition of ``basic" and the basic log structure, $a_2 : \M_X \to \M_Z$ induces an isomorphism on associated log structures (when $\M_X$ is viewed as a prelog structure via the ``unusual" structure map $\alpha_Z a_2$), so the data of the log structure map $i^\dagger$ making the lower triangle commute is the same as the data of a prelog structure map $i : \M_X \to \M_W$ satisfying $i_1^\dagger i = i_2^\dagger a_2$.  The commutativity of the solid square means \be u_1(h(m))i_1^\dagger(a_1(m)) & = & u_2(h(m)) i_2^\dagger (a_2(m)) \ee for $m \in \M_X$, so $i(m) := (u_1^{-1}u_2)(h(m)) a_1(m)$ will do the trick.  We can then complete the top triangle by setting $j^\dagger(m,p) := ((u_1^{-1}u_2)(p)i^\dagger(m),p)$.  To check the uniqueness of these completions, we first check uniqueness on the level of characteristics, where \eqref{logpointdiagram2} becomes $$ \xym@R-10pt{ & \ov{\N}_Z \ar[ld]_{(\ov{a}_1,h)} \ar[rd]^{(\ov{a}_2,h)} \\ \ov{\M}_{W} \oplus \u{P}  \ar[rd]_{\ov{i}_1^\dagger \oplus \Id}  & & \ar@{.>}[ll]_{\ov{j}^\dagger} \ar[ld]^{\ov{i}_2^\dagger \oplus \Id} \ov{\M}_{Z} \oplus \u{P}   \\ & \ov{\M}_{Y} \oplus \u{P}   \\ \ov{\M}_{W} \ar[uu] \ar[rd]_{\ov{i}_1^\dagger} & & \ar@{.>}[ll]_<<<<<{\ov{i}^\dagger} \ar[ld]^{\ov{i}_2^\dagger} \M_{Z} \ar[uu] \\ & \M_{Y} \ar[uu] } $$ and it is clear from the fact that $\ov{a}_2$ is an isomorphism that this diagram has a unique completion (namely $\ov{i}^\dagger = \ov{a}_2^{-1} \ov{a}_1$, $\ov{j}^\dagger = \ov{i}^\dagger \oplus \Id$).  Once we know the completion is unique on the level of characteristics, one sees easily that it is unique using the fact that $i^\dagger_1$, $j^\dagger_1$ are ``invertible on units". \end{proof}

\begin{thm} A $P$ log point in $X$ is minimal iff it is basic.  The category of $P$ log points in $X$ satisfies the conditions \eqref{enoughminimals} and \eqref{strict}, hence is equivalent, by the Descent Lemma, to the category $(\X,\M)$ for some groupoid fibration $\X \to \Sch$ with log structure $\M : \X \to \Log$. \end{thm}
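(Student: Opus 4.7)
The plan is to derive this theorem as a formal consequence of the two preceding lemmas and the Descent Lemma, following exactly the template of the analogous Theorem for log curves at the end of \S\ref{section:logcurves}.

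First I would dispatch the ``basic implies minimal'' direction by citing the previous lemma. For the reverse direction, given a minimal $P$ log point $w = (Z' \to Z, f)$ in $X$, I would apply the first part of the previous lemma to produce a morphism $(b',b) : w \to z$ with $z$ a basic $P$ log point in $X$ and $\underline{b} = \Id$ (hence $\underline{F}(b',b) = \Id$ in the category $\Z$ of $P$ log points in $X$ over $\LogSch$). Since $z$ is basic, it is minimal by the previous lemma, so $(b',b)$ is a morphism between two minimal objects lying in a common fiber of $\underline{F}$. By Proposition~\ref{prop:minimalityproperties}, any such morphism must be an isomorphism. But being basic is preserved under isomorphism (it is the condition that the canonical map $\bar{z}^\dagger : \bar{\N}_Z \to \bar{\M}_Z$ be an isomorphism, and this map is functorial in the log point), so $w$ is basic.

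Second, having identified minimal with basic, I would read off the two hypotheses of the Descent Lemma directly from the previous lemma. Condition \eqref{enoughminimals} is precisely the first assertion of that lemma (every $P$ log point admits a morphism to a basic $P$ log point with $\underline{b} = \Id$). Condition \eqref{strict} follows from the second assertion: given any morphism $(b',b)$ from a $P$ log point $w$ to a minimal (= basic) $P$ log point $z$, the lemma says $w$ is basic (= minimal) iff $b$ is strict, which is exactly the condition that $F(b',b) = b$ be cartesian in $\LogSch$.

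Finally, with \eqref{enoughminimals} and \eqref{strict} verified, the Descent Lemma yields the equivalence $\Z \simeq (\Z^m, \M)$ in $\CFG / \LogSch$, where $\Z^m$ is the full subcategory of basic (= minimal) $P$ log points in $X$ and $\M$ is the restriction of $F$ to $\Z^m$, viewed as a functor to $\Log$. I expect no genuine obstacle here; all the substantive work has been carried out in the two preceding lemmas, and what remains is purely formal bookkeeping to assemble them into the statement of the Descent Lemma.
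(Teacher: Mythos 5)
Your proposal is correct and follows essentially the same route as the paper, which proves this theorem by repeating the argument of the log curves theorem: basic implies minimal from the preceding lemma, minimal implies basic via the existence of a map to a basic object with identity underlying map plus the last part of Proposition~\ref{prop:minimalityproperties}, and conditions \eqref{enoughminimals}, \eqref{strict} read off from the two parts of the lemma on basic log points. Your extra remark that basicness is invariant under isomorphism (equivalently, isomorphisms are strict, so the strict/basic compatibility applies) is exactly the small point the paper leaves implicit.
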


\begin{proof} The proof is the same as the proof of Theorem~\ref{thm:logcurves}, using the lemmas above. \end{proof}

\end{document}